\pgfplotsset{width=.45\textwidth,compat=1.9}
\newcommand\papertitle{Rank-structured QR for Chebyshev rootfinding}
\newcommand\paperauthors{A.\ Casulli, L.\ Robol}
\crefname{hypothesis}{Hypothesis}{Hypotheses}
\headers{\papertitle}{\paperauthors}
\title{\papertitle\thanks{The authors are members of the research group
		INdAM--GNCS. This work has been partially supported by 
		the GNCS project ``Metodi low-rank per problemi di algebra lineare
		con struttura data-sparse''.}}
\author{A.\ Casulli\thanks{Scuola Normale Superiore, Pisa, P.za Cavalieri, 7, 56126 Pisa (PI), Italy
		(\email{angelo.casulli@sns.it}).}
	\and L.\ Robol\thanks{Department of Mathematics, 
		University of Pisa, L.go B. Pontecorvo, 5, 56127 Pisa (PI), Italy
		(\email{leonardo.robol@unipi.it})}.}
\newcommand{\conj}[1]{\overline{#1}}
\newcommand{\fastqr}{\texttt{chebzeros}}
\newcommand{\norm}[1]{\lVert #1 \rVert}
\newcommand{\C}{\ensuremath{\mathbb{C}}}
\newcommand{\fl}[1]{\mathrm{fl}\left[ #1 \right]}
\begin{document}
	\maketitle 
	
	\begin{abstract}
We consider the computation of roots of polynomials expressed 
in the Chebyshev basis. We extend the QR iteration presented 
in [Eidelman, Y., Gemignani, L., and Gohberg, I., \emph{Numer. Algorithms
}, 47.3 (2008): pp. 253-273] introducing an aggressive early
deflation strategy, and showing that the rank-structure allows to 
parallelize the algorithm avoiding data dependencies which would 
be present in the unstructured QR. We exploit the particular
structure of the colleague linearization to achieve quadratic complexity and
linear storage requirements. 
The (unbalanced) QR iteration used for Chebyshev rootfinding 
does not guarantee backward stability on the polynomial coefficients, 
unless the vector of coefficients satisfy $\norm{p} \approx 1$, an
hypothesis which is almost never verified for polynomials approximating
smooth functions. 
Even though the presented method is mathematically equivalent to the unbalanced QR 
algorithm, we show that exploiting the rank structure allows to guarantee
a small backward error on the polynomial, up to an explicitly computable 
amplification factor $\hat\gamma_1(p)$, which depends on 
the polynomial under consideration. 
We show that this parameter is almost always of moderate size, 
making the method accurate on several numerical tests, 
in contrast with what happens in the unstructured unbalanced QR. 
	\end{abstract}

	\begin{keywords}
		Chebyshev polynomials, rootfinding, QR, 
		rank-structure, backward error
	\end{keywords}
	
	\begin{AMS}
		65F15, 65H04
	\end{AMS}
	
	\section{Introduction}
	
	The QR method \cite{francis1961qr}, also known as Francis' iteration, is the de facto standard
	in eigenvalue computations of small to medium size unstructured matrices. It is 
	the method implemented by the \texttt{eig} MATLAB command, and is
  used in most 
	generic eigenvalue routines available in mathematical software packages. 
	
	Recently, there has been an increasing amount of contributions dealing 
	with fast variants of the QR iteration for matrices endowed with
	special structures, most often of the form 
	$
	A = F + uv^*
	$ where $F$ is either Hermitian or unitary, and 
	$uv^*$ is a rank $1$ correction. Since the QR iteration
	can be described as a sequence of unitarily similar matrices
	$A^{(k+1)} = Q_{k}^* A^{(k)} Q_k$, where $Q_k^* Q_k = I$, the Hermitian-plus-rank-one (resp. unitary-plus-rank-one) structure
	is maintained throughout the iterations. Algorithms in this class typically achieve $\mathcal O(n^2)$ complexity
	in time and have $\mathcal O(n)$ storage requirements.
	
	The roots of a polynomial $p(x)$ expressed in the monomial basis, are 
  equal to the eigenvalues of the so called ``companion matrix'' whose
  entries are easily computable by using the coefficients of the 
  polynomial \cite[pp. 60-61]{aurentz2018core}:
  \begin{equation} \label{eq:moncomp}
    p(x) = \sum_{j = 0}^n p_n x^n = 0 \iff 
    \det \left( \begin{bmatrix}
      -\frac{p_{n-1}}{p_n} & \dots & -\frac{p_1}{p_n} & -\frac{p_0}{p_n} \\
      1 \\
      & \ddots \\
      && 1 \\
    \end{bmatrix} -xI \right) = 0.
  \end{equation}
  The companion
  matrix can be decomposed as the sum of a unitary matrix and a
  rank one correction. Hence, structured QR algorithms allows to 
  design quadratic complexity methods for computing 
  roots of polynomials. This has been one of the main motivations
  for developing methods in this class 
  \cite{aurentz2015fast,aurentz2018core,aurentz2018fast,aurentz2019fast,
		bini2004shifted,bini2005fast,bini2010fast}. Only recently some of 
    these methods were proved to be stable eigensolvers; however, 
    it is a much harder challenge to obtain stability with respect
    to the polynomial coefficients; in this case, 
    using a backward stable eigensolver
	is not sufficient \cite{edelman1995polynomial,de2016backward,dopico2018block}, i.e., 
  a small backward error on the companion matrix does not necessarily imply 
  a small backward error on the polynomial itself. It has been recently shown
	that some structured methods enjoy this property thanks to the fact that 
	the backward error shares the same unitary-plus-rank-one 
	structure of the companion matrix \cite{aurentz2018fast,noferini2019structured}. 
	
	Chebyshev polynomials are a key 
	tool for dealing numerically with smooth functions defined on a real
	interval, as demonstrated by 
	the success of Chebfun \cite{battles2004extension}
	and Chebfun2 \cite{townsend2013extension}; hence, the same problem
	of obtaining a fast and stable QR iteration 
	has been considered for colleague matrices, the Chebyshev analogue of 
	the companion matrix in \eqref{eq:moncomp}, which will be 
  introduced in Section~\ref{sec:structured-qr}.
  Contributions in this direction 
	can be found in 
	\cite{vandebril2010implicit,bini2005fast,eidelman2008efficient,boito2014implicit,gemignani2017fast}. Analogously to the monomial case, 
	computing the roots of polynomials expressed in the Chebyshev basis
	by eigenvalue methods is not generally backward stable even when
	the underlying eigenvalues solver has this property; the extensions
	of the results in \cite{edelman1995polynomial} is described in 
	\cite{noferini2017chebyshev,nakatsukasa2016stability}. More recently, 
	it has been shown that an eigenvalue solver capable of preserving the
	Hermitian-plus-rank-one structure in the backward error, 
	and ensuring a small relative
	backward error on both addends,
	is 
	stable on the polynomial coefficients \cite{noferini2019structured}. Unfortunately,
	to
	the best of our knowledge,
	this property has not been proved for 
	any of the aforementioned rank-structured methods
  for the Hermitian-plus-rank-one case. This paper aims at improving this situation. The method analyzed does not possess 
	this property for any polynomial, but we introduce an easily computable 
	parameter that characterizes the stability; we show that this stronger stability
	property is almost always attained for polynomials arising from approximating 
	smooth functions over $[-1, 1]$, and the parameter is effective at 
	detecting the cases where it does not. 
	
	\subsection{Main contributions}
	
	This work contains several contributions. We reconsider the
	algorithm described in \cite{eidelman2008efficient} for computing
	the eigenvalues of Hermitian-plus-rank-one matrices in the setting of 
	colleague matrices (which belong to this class). 
  
  We extend the method by describing how to perform aggressive early deflation
	working on the structured representation, and we show that---in contrast
	to what happens for the unstructured QR---using the Hermitian-plus-rank-one
	structure allows to easily parallelize the algorithm, avoiding data
	dependencies during the unitary transformations. 
  
  We show that in this
	context the proposed method is a backward stable eigensolver, 
  the same property of the
	unstructured unbalanced QR iteration, but only requires $\mathcal O(n)$ 
	storage and $\mathcal O(n^2)$ floating point operations (flops). However, 
  this is often not enough to deliver accurate results, because balancing
  plays an important role in this context, but would generally break  
  the Hermitian-plus-rank-one structure, making it not applicable. 
	
	For this reason, we introduce an explicitly computable parameter
	$\hat{\gamma}_j(p)$ that characterizes the backward stability of 
	the method on the polynomial coefficients. When this parameter is 
	moderate, the method produces a small relative backward error
	on the polynomial coefficients, even if the coefficient 
	vector $\lVert p\rVert$ has large norm, in contrast to the
	unstructured (unbalanced) QR iteration. 
	The boundedness of such parameter 
	cannot be guaranteed a priori, but holds in most of the practical cases that we have tested and is easy and inexpensive 
	to check at runtime. We demonstrate that this enables the use of
	the algorithm for rootfinding of analytic functions over a real interval, 
	when coupled with an interpolation step similar to the one in
	Chebfun \cite{battles2004extension}. 
	In particular, the obtained accuracy
	is on par with the one of 
	MATLAB's \texttt{eig} (which uses balancing), but with a much
	lower computational cost. 
	
	The FORTRAN code implementing the algorithm (single and double shift, 
	including a parallel version with
	aggressive early deflation of the single shift code) is publicly
	available at 
	\url{https://github.com/numpi/chebqr}, and is distributed with MATLAB interfaces. 
	
	\subsection{Notation}
	
  We denote by $\mathbb C^{m \times n}$ and $\mathbb R^{m \times n}$ the sets of $m \times n$ matrices with coefficients 
  in the complex and real fields, respectively. 
	
	We employ a Matlab-like notation for submatrices. For instance, given 
	$A \in \mathbb C^{m \times n}$ the matrix $A_{i_1:i_2, j_1:j_2}$ 
	is the submatrix obtained selecting only rows from $i_1$ to $i_2$ and 
  columns from $j_1$ to $j_2$ (extrema included).

  The symbol $\epsilon_m$ is used to indicate the unit roundoff.
  In the remainder of the paper floating point computations are 
  performed in IEEE 754 \texttt{binary64} \cite{IEEE754-2008},
  called double precision before \cite{IEEE754-1985}, for 
  which the unit-roundoff is approximately 
  $1.11 \cdot 10^{-16}$ \cite{higham2002accuracy}. The norms are denoted by $\norm{\cdot}$. When not explicitly
	specified, we mean the spectral norm (for matrices), or the Euclidean one (for vectors).
	
	Often we deal with error bounds obtained through backward error 
	analysis, for which we ignore the constants and the (polynomial) 
	dependency in the size of the problem. We denote backward errors 
	on $A$ as $\delta A$, and we write the bounds with constants
	omitted as follows:
	\[
	  \norm{\delta A} \lesssim \norm{A} \epsilon_m \iff 
	  \norm{\delta A} \leq p(m, n) \cdot \norm{A} \epsilon_m, 
	\]
	where $p(m,n)$ is a low-degree polynomial in $m,n$, the 
  dimensions of $A$; in most cases discussed in the paper, the degree of the 
  polynomial will be between $1$ and $3$. We say that an 
  algorithm for computing the eigenvalues of $A$ 
  is \emph{backward stable} if it computes the exact eigenvalues 
  of $A + \delta A$ with 
  $\norm{\delta A} \lesssim \norm{A} \epsilon_m$, and that 
  an algorithm for computing the roots of a polynomial 
  $p(x)$ is backward stable if it computes the roots of a 
  polynomial $\delta p(x)$ where\footnote{in this case we use 
  the notation $\norm{p}$ to denote the norm of the coefficients vectors for $p(x)$.} $\norm{\delta p} \lesssim 
  \norm{p} \epsilon_m$.
	
	\section{Structured QR iteration} \label{sec:structured-qr}
	Let $A = F + uv^*$ be an $n \times n$ upper Hessenberg complex matrix 
	where $F$ is Hermitian, and $uv^*$ 
	is a rank $1$ perturbation. In
	this section we describe how to efficiently compute its eigenvalues in quadratic time. 
	Although this may be applied to any
	matrix with such structure, the
	main application that we consider
	is the computation
	of the roots of a polynomial
	\[
	  p(x) = \sum_{j  =0}^n
	    p_j T_j(x), 
	\]
	expressed in the Chebyshev basis. 
	This can be achieved by computing
	the eigenvalues of 
  the colleague matrix (originally introduced 
  by \cite{good1961colleague}, and here scaled to make it
  symmetric plus rank one): 
	\begin{equation}
	\label{eq:colleague}
	  C = \frac{1}{2}\begin{bmatrix}
	    0 & 1 \\
	    1 && \ddots \\
	    & \ddots && 1 \\
	    && 1 && \sqrt{2} \\
	    &&& \sqrt{2} \\
	  \end{bmatrix} - 
	  \frac{1}{2p_n} \begin{bmatrix}
	  1 \\ 0 \\ \vdots \\ \vdots \\ 0
	  \end{bmatrix}
	  \begin{bmatrix}
	    p_{n-1} & p_{n-2} & \dots  &  p_1 & \sqrt{2} p_0 \\
	  \end{bmatrix}
	\end{equation}
	
	In Section~\ref{Gemignani} 
	we briefly recall the structured QR iteration first proposed by 
	Eidelman et al. in \cite{eidelman2008efficient}; we will then improve
	this approach by showing how to perform {aggressive early
		deflation} in a structured way in Section~\ref{structured AeD}, and how the structure allows for an easy and efficient parallelization of the algorithm in Section~\ref{parallel}.

\subsection{Hermitian-plus-rank-one QR}\label{Gemignani}

This subsection recalls the
structured implicit QR iteration of \cite{eidelman2008efficient}; 
this is mathematically equivalent to the standard one, but an 
efficient representation for the upper Hessenberg matrix is used 
to reduce the computational and storage costs. 

To establish the notation, we first summarize how the implicit 
QR iteration works in the generic case. 
Let $A$ be an $n \times n$ irreducible upper Hessenberg matrix, 
that is such that $A_{ij} = 0$ for any $i > j + 1$, and 
$A_{ij} \neq 0$ for any $i = j+1$. 
Suppose a shift $\sigma \in \mathbb C$ has been chosen. Then, 
we compute a Givens rotation $G_1$, such that 
$G_1^* (A - \sigma I) e_1$ is a multiple of $e_1$ and 
update $A$ by performing a similarity transformation 
$G_1^* AG_1$. Doing this we break the Hessenberg form, 
indeed a bulge has been created in the entry $(3,1)$. 
The subsequent steps consist in computing Givens rotations 
$G_2,\dots,G_{n-1}$ such that the matrix 
$(G_2\dots G_{n-1})^* A(G_2\dots G_{n-1})$ is again in 
Hessenberg form. We refer to this update to a QR sweep, 
or iteration. 

If the shifts $\sigma$ are chosen appropriately, we expect that in a
few sweeps 
some subdiagonal elements of $A$ will become negligible. This allows to split the eigenvalue problem into two subproblems (the matrix is now
numerically block upper triangular). Most often, the negligible
element will be in position $(n, n-1)$; hence, we immediately identify the
element in position $(n,n)$ as an eigenvalue and reduce the problem
size to $n-1$. 

A more in depth discussion of the QR algorithm, and the role
of the shifts can be found in \cite{watkins2007matrix}. 
In such book the convergence of the algorithm is interpreted in 
terms of Krylov subspaces, in particular this allows an easy 
introduction  of the double shift  (or, more generally, multishift) QR, which
is obtained replacing the initial shifted 
column $(A - \sigma I) e_1$ with the evaluation
of a low degree polynomial $\rho(A) e_1$. 

\subsubsection{Structure preservation}

We note that each upper Hessenberg matrix $A = F + u v^*$ such that 
$F = F^*$ can be completely determined using $\mathcal O(n)$ parameter. Indeed, 
we have the following relations
for the 
entries of $F$ in the lower triangular part, excluding the first subdiagonal:
\begin{equation} \label{eq:Aij1}
  0 = A_{ij} = F_{ij} + u_i \conj{v_j} \implies F_{ij} = -u_i \conj{v_j}, \qquad 
  \forall i > j + 1, 
\end{equation}
and therefore, relying on $F_{ij} = \conj{F}_{ji}$, we may write:
\begin{equation} \label{eq:Aij2}
  A_{ij} = F_{ij} + u_i \conj{v_j} = 
  \conj{F_{ji}} + u_i \conj{v_j} =
  u_i \conj{v_j} -\conj{u_j} v_i, \qquad 
  \forall j > i + 1. 
\end{equation}
Hence, all the entries above the first superdiagonal 
can be determined solely 
from $u$ and $v$; the complete matrix $A$ can be recovered storing its
diagonal and subdiagonal entries, and the vectors $u,v$. 

In addition, performing a unitary similarity preserves the 
Hermitian-plus-rank-one structure since
\[
  QAQ^*=QFQ^*+(Qu)(Qv)^*, \qquad Q^*Q = I. 
\] 
Hence, the structure is inherited by all the
matrices produced by the QR iteration. 

Algorithmically, we store two vectors 
$d\in \C^n$ and $\beta\in \C^{n-1}$ whose entries are the diagonal and the subdiagonal entries of $A$, respectively. This completely determines the 
lower triangular part of $A$ (thanks to the upper Hessenberg form); 
the upper part can
be retrieved exploiting the symmetry as discussed above, which yields:
\begin{equation}\label{g2.2}
A_{i,j} = \begin{cases}
  d_i  & i = j \\
  \beta_{i-1} & i = j + 1 \\
  \conj{\beta_i} - \conj{u_{i+1}} v_{i} + u_{i} \conj{v_{i+1}} & j = i + 1\\
   u_{i} \conj{v_j} - \conj{u_j} v_{i} & j > i + 1\\  
\end{cases}
\end{equation} 
The preservation of Hermitian-plus-rank-one structure implies that 
a matrix obtained after $k$ sweeps of QR can be determined using $4$ vectors $d^{(k)},\beta^{(k)},u^{(k)},v^{(k)}$; we call these vectors the generators of $A^{(k)}$. 

The structured QR iteration is then implemented as a sequence
of rotation acting on consecutive rows. Each of these rotations 
can be applied to the structured representation by updating 
in place a constant number of entries in $d^{(i)}, \beta^{(i)}, u^{(i)}, v^{(i)}$.

Storing only these vectors makes the memory storage linear, in contrast to the quadratic cost for the standard QR algorithm. 

The details of the iteration are reported in sections \ref{sec:single-shift}
(for the single shift case) and \ref{sec:double-shift} (for the double shift algorithm).

\subsubsection{Single-shift iteration}
\label{sec:single-shift}
In this section we briefly summarize the
single shift iteration of the structured QR algorithm introduced in \cite{eidelman2008efficient}.
Let $A = F + uv^*$ be a
Hermitian-plus-rank-one matrix in upper Hessenberg form; 
we discussed how to store it 
using $\mathcal O(n)$ memory, and we aim at computing its eigenvalues leveraging
this representation, and achieving $\mathcal O(n^2)$ complexity. 

We consider a single shift QR step in this structured format. 
Let $\rho(z)=z-\sigma$ be a shift polynomial. As in the implicit version of the QR algorithm, to obtain the starting Givens rotator $G_1$ we  compute the first column of the matrix $A-\sigma I$. Since $A$ is in 
upper Hessenberg form we have
\begin{equation}
\rho(A)e_1=\begin{bmatrix}
d_1-\sigma\\
\beta_1\\
0\\
\vdots\\
0
\end{bmatrix}.
\end{equation}
The rotator such that $G_1^*\rho(A)e_1$ is a multiple of $e_1$ can be computed by standard BLAS routines, such as \texttt{zrotg}.

$A$ is now replaced by $G_1^* A G_1$; since $A$ is represented
by means of the four vectors $d, \beta, u, v$, we directly 
update this representation. Note that the two matrices only differ in
the top $3$ rows. Thanks to the symmetry, the updated
parameters can be recovered from the top $3 \times 2$ block of the
updated matrix, which we compute as follows:
\begin{equation} \label{eq:single-shift-dbeta}
  \left(G_1^* A G_1\right)_{1:3,1:2} = 
  \begin{bmatrix}
    \hat{G}_1^* \\
    & 1
  \end{bmatrix} \begin{bmatrix}
  d_1&{(\overline{\beta_1}-\overline{u_2}{v_1})}+u_1\bar{v}_2\\
  \beta_1&d_2\\
  0&\beta_2
  \end{bmatrix} 
  \hat{G}_1,
\end{equation}
where $\hat{G}_1=(G_1)_{1:2,1:2}.$

The $3 \times 2$ updated block can be computed directly, using a constant
number of floating point operations independent of $n$, and the the updated
entries of $d, \beta$ can be read off from the
updated matrix. Similarly, the updated $u,v$ 
can be computed by $G_1^* u$ and $G_1^* v$. 

As in the standard implicit QR, this operation creates a bulge, breaking 
the Hessenberg structure in position $(3,1)$. In particular, the matrix cannot 
be recovered from the generators unless this bulge is stored as well. Since 
this is just one extra entry, it does not affect the computational costs
or the storage. 

The bulge can be chased 
with another rotation $G_2$ operating on the rows $(2,3)$, whose action
can be computed updating the vectors $d,\beta,u,v$ in a similar fashion.
The procedure is repeated until the bulge disappears at the bottom of the 
matrix.

When applying $G_i^*$ on the left and $G_i$ on the right with $i > 1$ the analogous
 $3 \times 2$ block $A_{i:i+2,i:i+1}$ needs to be considered. We note that to compute $G_i$ the data required are just the subdiagonal element $\beta_{i-1}$ and the bulge given from the previous step. 
In Algorithm \ref{single_bulge_chasing} we summarize how to perform the $i$-th bulge chasing step.

	\begin{algorithm}
		\caption{Single-shift bulge chasing ($i$-th step)}\label{single_bulge_chasing}
		\begin{algorithmic}[1]
			\Procedure{chasing}{$d,\beta,u,v$,bulge}
			\State $\gamma\gets (\overline{\beta}_i-\overline{u}_{i+1}{v_i})+u_i\overline{v}_{i+1}$
			\State $G^*\gets\text{Givens}\left(\begin{bmatrix}
				\beta_{i-1}\\
				\text{bulge}
				\end{bmatrix}\right)$
			\State $\beta_{i-1}\gets\left\lVert\begin{bmatrix}
				\beta_{i-1}\\
				\text{bulge}
			\end{bmatrix}\right\lVert_2$
			\State$\begin{bmatrix}
				d_{i}&\gamma\\
				\beta_{i}&d_{i+1}\\
				\text{bulge}&\beta_{i+1}
			\end{bmatrix}\gets \begin{bmatrix}
			G^*&\\
			&1
			\end{bmatrix}
			\begin{bmatrix}
			d_{i}&\gamma\\
			\beta_{i}&d_{i+1}\\
			0&\beta_{i+1}
			\end{bmatrix}G$
			\State $\begin{bmatrix}
				u_{i}\\
				u_{i+1}
			\end{bmatrix}\gets G^* \begin{bmatrix}
			u_{i}\\
			u_{i+1}
		\end{bmatrix}$
		\State $\begin{bmatrix}
			v_{i}\\
			v_{i+1}
		\end{bmatrix}\gets G^* \begin{bmatrix}
			v_{i}\\
			v_{i+1}
		\end{bmatrix}$
				\EndProcedure
			\end{algorithmic}
		\end{algorithm}

The cost of each of the
above steps is constant and every iteration requires $\mathcal O(n)$
steps; hence, the cost of each iteration is linear in $n$. Assuming the convergence of all the eigenvalues in a number of steps linear in the dimension of the problem\footnote{The QR iteration typically converges in about 
	$2n$ or $3n$ iterations; this measure is only weakly influenced by
	the matrix under consideration, and is one of the features that makes
	the QR iteration the method of choice for the dense unsymmetric eigenvalue 
	problems. We refer to Section~\ref{sec:numexp} for some numerical experiments
  supporting this claim.} the total cost is quadratic in $n$.

The described procedure is backward stable in the original data: the computed eigenvalues are the exact ones of $A + \delta A$, where 
\[
  \norm{\delta A} \lesssim \left( \norm{F}_2 + \norm{u}_2 \norm{v}_2 \right) \epsilon_m, 
\]
where $\lesssim$ denotes an upper bound up to a small degree polynomial in
the dimension, as is customary for backward error analyses. This result
is proven in  \cite{eidelman2008efficient} but is in general weaker than 
the stability guaranteed by the usual unstructured QR iteration, which instead
guarantees that $\norm{\delta A} \lesssim \norm{A} \epsilon_m$. 
Without further assumptions, $\norm{F}_2 + \norm{u}_2 \norm{v}_2$ might 
be arbitrarily larger than $\norm{A}$. 

In Section~\ref{backward} we reconsider the backward error analysis
focusing on the problem of interest in this paper---the computation
of roots for Chebyshev polynomials---and we show that for this case the same
rigorous bound of the standard QR iteration can be obtained. In addition, 
we will also show that the structured method often exhibits superior stability
properties.


\subsubsection{Double shift iteration}
\label{sec:double-shift}
Whenever the matrix $A$ is real, it can be convenient to rely on a double 
shift iteration, instead of the single shift approach described in the 
previous section. This makes the method more efficient (since complex arithmetic
can be avoided completely), and ensures that the computed eigenvalue have 
the required complex-conjugation symmetry. 

From a high-level perspective, this introduces the following changes. Instead 
of choosing a degree one polynomial $\rho(z) = z - \sigma$ we allow to pick a
degree $2$ one, of the form $\rho(z)=z^2-2\Re(\sigma) z+|\sigma|^2$, where
$\sigma \in \mathbb C$. Then, we determine two rotations $G_{1,1}$ and $G_{1,2}$ 
such that $G_{1,1}^* G_{1,2}^* \rho(A) e_1$ is a multiple of $e_1$. Performing 
a similarity using $G_{1,1}^* G_{1,2}^*$ in place of $G_1$ creates a perturbation
in the upper Hessenberg structure, which is then chased to the bottom of the
matrix. This procedure is standard, and we refer the reader to
\cite{watkins2007matrix} for further details. 

In a similar fashion to what is discussed in Section~\ref{sec:single-shift}, 
this procedure can be carried relying on the structured
representation of 
the matrix $A$. 
The column
vector $\rho(A) e_1$ can be computed with a constant number of operation
in terms of $d, \beta, u, v$:
\begin{equation}
	\rho(A)e_1=\begin{bmatrix}
	d_1^2+\gamma_1\beta_1-2\Re(\alpha)d_1+ |\alpha|^2\\
	\beta_1(d_1+d_2-2\Re(\alpha))\\
	\beta_1\beta_2\\
	0\\
	\vdots\\
	0
	\end{bmatrix}.
\end{equation}
We now compute the two Givens rotations, $G_{1,1}$ and $G_{1,2}$, 
such that $G_{1,1}^* G_{1,2}^*\rho(A)e_1$ is a multiple of $e_1$. 
The matrix $G_{1,1}^* G_{1,2}^* A G_{1,2} G_{1,1}$ differs from $A$ only in the
top $3$ rows; analogously to the single shift case, we can recover the updated
parameters by only computing the top $4 \times 3$ block, and updating
$u, v$ separately. This does also provide the three entries composing 
the ``bulge'', i.e., 
the ones that perturb the upper Hessenberg structure. 

Let us denote for the sake of brevity by $\gamma_i:={(\conj{\beta_i}-\conj{u_{i+1}}{v}_i)}+u_i\bar{v}_{i+1}$ the
entries on the upper diagonal of $A$, recovered according to the procedure
described in \eqref{g2.2}.
Then, the computation required to apply the two rotation is the following:
\begin{equation} \label{eq:double-first-steps}
\begin{split}
\left(G_{1,1}^* G_{1,2}^* A G_{1,2} G_{1,1}\right)_{1:4,1:3} &= \\
=\begin{bmatrix}
\hat{G}_{1,1}^* &\\
& 1 
\end{bmatrix} 
&
\begin{bmatrix}
\hat{G}_{1,2}^* &\\&1
\end{bmatrix}\begin{bmatrix}
d_1&\gamma_1& u_1\conj{v}_3 -\conj{u_1}{v}_3 \\
\beta_1&d_2&\gamma_2\\
0&\beta_2&d_3\\
0&0&\beta_3
\end{bmatrix}\hat{G}_{1,2} \hat{G}_{1,1}, 
\end{split}
\end{equation}
where $\hat{G}_{1,1}=(G_{1,1})_{1:3,1:3}$ and  $\hat{G}_{1,2}=(G_{1,2})_{1:3,1:3}$. 
 
The cost in floating point operations of this update is independent of $n$,
and therefore it accounts for $\mathcal O(1)$ in the computational cost. 

In the updated 
matrix of \eqref{eq:double-first-steps}, the entries in position $(3,1), (4,1), (4,2)$ constitute
the bulge; they need to be chased down to the bottom of the matrix to
complete a single sweep. 

We describe in more detail the generic 
$i$-th step of bulge chasing; assume that the three entries composing the bulge
are known from the previous step, or from the initialization of the sweep
described above. We denote them by $b_{i+2,i}, b_{i+3,i}, b_{i+3,i+1}$. 
Then, we compute Givens rotations 
$G_{i+1,1}$ and $G_{i+1,2}$ such that 
\begin{equation} \label{eq:double-rotg}
\hat{G}_{i+1,1}^*
\hat{G}_{i+1,2}^*
\begin{bmatrix}
\beta_i\\
b_{i+2, i}\\
b_{i+3, i}
\end{bmatrix}=\begin{bmatrix}
c\\0\\0
\end{bmatrix},
\end{equation}
for a suitable constant $c$, where $\hat{G}_{i+1,j}=(G_{i+1,j})_{i+1:i+3,i+1:i+3}$ for $j=1,2$. The rotations can be used to build the 
matrix $A^{(i+1)}$ defined as follows:
\[
  A^{(i+1)} := G_{i+1,1}^*G_{i+1,2}^* A^{(i)} G_{i+1,2} G_{i+1,1}.
\]
To continue with the process we need to update the generators 
used to store the matrix $A^{(i)}$, and also memorize the updated
bulge, which will have moved down one step. Updating $u^{(i)}$ and $v^{(i)}$ 
requires only to multiply them on the left by $G_{i+1,1}^* G_{i+1,2}^*$, 
that is:
\begin{equation*}
u^{(i+1)}=G_{i+1,1}^* G_{i+1,2}^* u^{(i)} \quad \text{and} \quad v^{(i+1)}=G_{i+1,1}^* G_{i+1,2}^* v^{(i)}.
\end{equation*}
Computing the action of the rotation in 
\eqref{eq:double-rotg} yields the updated
value of $\beta_i$. 
To update the other entries in 
the vectors $d$ and $\beta$ 
we compute the action of the rotations on the
$4 \times 3$
submatrix $A^{(i)}_{i+1:i+4, i+1:i+3}$, which can be written explicitly
as follows:
\begin{equation} \label{eq:double-shift-chase}
\begin{bmatrix}
 \hat G_{i+1}^* \\ & 1 
\end{bmatrix}
\begin{bmatrix}
d^{(i)}_{i+1}&\gamma_{i+1}&u^{(i)}_{i+1}\bar{v}^{(i)}_{i+3} {-\conj{u}^{(i)}_{i+3}{v}^{(i)}_{i+1}}\\
\beta^{(i)}_{i+1}&d^{(i)}_{i+2}&\gamma_{i+2}\\
b_{i+3,i+1}&\beta^{(i)}_{i+2}&d^{(i)}_{i+3}\\
0&0&\beta_{i+3}
\end{bmatrix}
\hat G_{i+1},
\end{equation}
where $\hat G_{i+1} := 
 \hat G_{i+1,2} \hat G_{i+1,1}.$

After the update has been performed, the
new bulge will be available in the entries in position 
$(3,1), (4,1)$ and $(4,2)$ of the matrix in 
\eqref{eq:double-shift-chase}. These entries will form the 
values $b_{i+3,i+1}, b_{i+4,i+1}$ and $b_{i+4,i+2}$ used 
in the next chasing step. 

The cost of each step is independent of $n$, and every sweep requires $n$ steps, 
so the cost of each iteration is linear. 
Using the same argument employed in the single shift case, 
and assuming convergence within
$\mathcal O(n)$ sweeps, we conclude that the 
algorithm has a quadratic cost.

\subsection{Aggressive early deflation} 
\label{structured AeD}

Modern implementations of the QR iteration, such as the one found in 
LAPACK \cite{anderson1999lapack}, rely on a number of strategies to improve their
efficiency. One of the most effective, developed by Braman, Byers and Mathias in \cite{braman2002multishift2} is known as aggressive early deflation (AED). 
AED is a method for deflating eigenvalues that, combined with standard deflation, improves the convergence speed, and in particular speeds up the
detection of almost deflated eigenvalues. 

A practical description of this method can be found in \cite{watkins2007matrix}.
In this section we consider the case of a matrix $A = F + uv^*$, with
the now usual assumption $F = F^*$. It turns out that, if we are running 
the QR iteration (either single or double shift) in the structured 
format described in Sections~\ref{sec:single-shift} and \ref{sec:double-shift}, 
the AED step can be performed in structured form as well. 

We now briefly recall how AED works in general, and then show how these 
operations can be carried out in a structured way. Let $k\ll n$, and consider
the following partitioning of $A$:
\[
  A = \begin{bmatrix}
  A_{11} & A_{12} \\
  A_{21} & A_{22} \\
  \end{bmatrix}, 
\]
where $A_{22}$ is $k \times k$, $A_{11}$ is $(n-k) \times (n-k)$. We note that $A_{21}$ 
is made of a single entry in its top-right corner, which is equal to $A_{n-k+1,n-k}$. 

We  rely on a simple implementation of the QR iteration (i.e., without the AED scheme) to compute
the Schur form of $A_{22}$. This does not pose any computational challenges, since we assume $k$ 
to be small. Then, we use the computed Schur form $Q^* A_{22} Q = T$ to perform the following
unitary similarity transformation on the original matrix $A$:
 \begin{equation}\label{AeD1}
 \begin{bmatrix}
 A_{11}&A_{12}Q\\
 Q^*A_{21}& Q^*A_{22}Q
 \end{bmatrix},
 \end{equation}
Note that $Q^* A_{21}$ has non-zero entries only in its last column. Let us call this column $x$. Then,
the AED scheme proceeds by partitioning the vector $x$ as follows
\[
  Q^* A_{21} e_{n-k} = x = \begin{bmatrix}
    x_1 \\
    x_2 \\
  \end{bmatrix}, 
\]
where $x_1$ has $0 \leq j \leq k$ entries, and $j$ is chosen as small as possible under the 
constraint that all 
the entries of $x_2$ satisfy 
\begin{equation} \label{eq:aed-defl-crit-dense}
  |x_2|_i \leq \min\{ |T_{n-k+j+i}|, |A_{n-k+1,n-k}| \} \cdot \epsilon_m.
\end{equation}
If \eqref{eq:aed-defl-crit-dense} is satisfied for some entries, than these can be safely
set to zero without damaging the backward stability of the approach, and allowing
to deflate
$k - j$ eigenvalues at once. More precisely, it implies that $k-j$ eigenvalues of $A_{22}$ 
are also (numerically) eigenvalues of $A$, even though they could not be detected immediately
by the usual deflation criterion. The remaining eigenvalue of $A_{22}$ can be put to good 
use by employing them as shifts for the next sweeps. 

The described procedure can be applied directly in the structured format that we have
relied upon for developing the QR iteration. As a first step, we need to characterize the structure of the partitioned matrix \eqref{AeD1}. 
Submatrices of $A$ are also completely determined
by the $4$ vectors, as we summarize in the next Lemma. 

\begin{lemma} \label{lem:diagblocks}
	Let $A = F + uv^*$ be an upper Hessenberg
	Hermitian-plus-rank-one matrix, with vector generators
	$d,\beta,u,v$. Then, if we partition 
	\[
	  A = \begin{bmatrix}
	    A_{11} & A_{12} \\
	    A_{21} & A_{22} \\
	  \end{bmatrix}, \qquad 
	  A_{11} \in \mathbb C^{(n-k) \times (n-k)}, \quad 
	  A_{22} \in \mathbb C^{k \times k}, 
	\]
	then also
		$A_{11}$ and $A_{22}$ are upper Hessenberg
		Hermitian-plus-rank-one matrices, with generators
		$d_{1:n-k}$, $\beta_{1:n-k-1}$, $u_{1:n-k}$, $v_{1:n-k}$, and 
		$d_{n-k+1:n}$, $\beta_{n-k+1:n-1}$, $u_{n-k+1:n}$, $v_{n-k+1:n}$, respectively.
\end{lemma}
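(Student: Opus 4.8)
The plan is to reduce the statement to two elementary facts: leading and trailing principal submatrices of an upper Hessenberg matrix are again upper Hessenberg, and principal submatrices of a Hermitian matrix are Hermitian. First I would introduce $F_{11}$ and $F_{22}$, the leading $(n-k)\times(n-k)$ and trailing $k\times k$ principal submatrices of $F$, and observe that the block decomposition of $A = F + uv^*$ yields
\[
A_{11} = F_{11} + u_{1:n-k}\, v_{1:n-k}^*, \qquad
A_{22} = F_{22} + u_{n-k+1:n}\, v_{n-k+1:n}^*.
\]
Since $F = F^*$, both $F_{11}$ and $F_{22}$ are Hermitian, so $A_{11}$ and $A_{22}$ are Hermitian-plus-rank-one. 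The upper Hessenberg property is inherited because the condition $A_{ij} = 0$ for $i > j+1$ restricts verbatim to the (shifted) index ranges of either diagonal block.

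Next I would identify the generators. For $A_{11}$ this is immediate: being a leading principal submatrix, its diagonal is $d_{1:n-k}$, its subdiagonal is $\beta_{1:n-k-1}$, and its rank-one part is $u_{1:n-k} v_{1:n-k}^*$, so feeding these four subvectors into the reconstruction formula \eqref{g2.2} returns $(A_{11})_{ij} = A_{ij}$ for all $1 \le i,j \le n-k$. For $A_{22}$ the same conclusion holds after the index shift $p = (n-k)+i$, $q = (n-k)+j$: I would check case by case (the diagonal, the first subdiagonal, the first superdiagonal, and the range $j > i+1$) that \eqref{g2.2} applied with the candidate generators $d_{n-k+1:n}$, $\beta_{n-k+1:n-1}$, $u_{n-k+1:n}$, $v_{n-k+1:n}$ reproduces exactly $A_{(n-k)+i,(n-k)+j}$, invoking in the superdiagonal and far off-diagonal cases the entry identities \eqref{eq:Aij1}--\eqref{eq:Aij2}.

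The one point that genuinely needs care — and it is really the crux — is that the reconstruction formula \eqref{g2.2} is \emph{local}: the $(i,j)$ entry depends only on $d_i$, on $\beta_{i-1}$ or $\beta_i$, and on the components $u_i, u_j, v_i, v_j$, all of whose indices lie in the index set of the diagonal block containing $(i,j)$. Hence truncating the generators to a contiguous index range commutes with the reconstruction, which is precisely what the lemma asserts. Beyond this bookkeeping there is no real obstacle; in particular the off-diagonal block $A_{21}$ never enters the argument, since neither $A_{11}$ nor $A_{22}$ involves it.
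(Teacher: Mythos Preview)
Your argument is correct and is precisely the ``direct verification'' the paper invokes as its proof: you spell out explicitly that principal submatrices inherit the Hermitian and upper Hessenberg structures, that the rank-one term restricts to outer products of the corresponding subvectors, and that the locality of the reconstruction formula \eqref{g2.2} makes the truncated generators represent the diagonal blocks. There is nothing more to add.
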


\begin{proof}
	Direct verification. 
\end{proof}

Lemma~\ref{lem:diagblocks} implies that the generator representation used for $A$ immediately
gives generator representations for $A_{11}$ and $A_{22}$. For the latter, we can rely on this
representation and the algorithm presented in the manuscript to construct a Schur form, and at 
the same time update using the computed rotations the last column of $A_{21}$, which is equal
to $\beta_{n-k} e_1$. Hence, we obtain a structured representation of the updated matrix: 
\[
	\begin{bmatrix}
		A_{11}&A_{12}Q\\
		Q^*A_{21}& Q^*A_{22}Q
	\end{bmatrix},
\]
Then, we proceed as in the dense case, and look at the entries of the vector $x$ composing the
last column of $Q^{*} A_{21}$. If it contains negligible elements, we set them to zero and deflate
the associated components. Note that the check in \eqref{eq:aed-defl-crit-dense} can be 
stated directly in terms of the generators as follows:
\begin{equation}
  |x_2|_i <\min \{|\beta_{n-k}|,|d_{n-k+i+j}|\} \epsilon_m.
\end{equation}
Once the deflated components have been removed, we are left with a matrix for which the top
$(n-k-1) \times (n-k-1)$ part is already upper Hessenberg (and its structured representation
has not changed), whereas the trailing $(j+1) \times (j+1)$ block has the 
following form: 
\begin{equation} \label{eq:AeD-tail-structure}
\begin{bmatrix}
* &*&*&\dots &*\\
x_1&t_{1,1}&t_{1,2}&\dots&t_{1,j}\\
x_2&&t_{2,2}&\dots&t_{2,j}\\
\vdots&&&\ddots&\vdots\\
x_j&&&&t_{j,j}
\end{bmatrix}.
\end{equation}
We remark that for the upper triangular matrix $T$ composing the trailing  $j \times j$ block
we have a structured representation in terms of generators. In order to continue the QR
iterations, we need to reduce this matrix again in upper Hessenberg form. 

We now assume that the deflated eigenvalues (if any) have been deflated; 
hence, we are left with a trailing $(j+1) \times (j+1)$ block $M$ which
is not in upper Hessenberg form, and needs to be reduced before 
continuing the iterations. Since this process only works on the trailing
block, we can safely ignore the rest of the matrix and its structured
parametrization which will not be altered by this reduction; only the
last $j$ or $j-1$ entries of $d,\beta,u,v$ will be updated. 

The trailing matrix has the structure of \eqref{eq:AeD-tail-structure}.
The $j \times j$ trailing upper triangular  block
is Hermitian-plus-rank-one, and we are given a structured representation
in terms of the usual vectors $d,\beta,u,v$. 
The matrix $M$ can be reduced to upper Hessenberg form directly
producing the structured representation in $\mathcal O(j)$ 
flops working as follows:
\begin{enumerate}[(i)]
	\item The entries $x_i$ of the vector $x$ composing the first 
	  column are annihilated one at a time using Givens rotations, starting from the bottom. The transformations are performed 
	  as similarities, acting on the left and right. 
\item When an entry of $x$ is annihilated (except for the last one) a bulge that breaks the Hessenberg form appears. We chase it to
the bottom using Givens rotations. This does not creates non-zero elements in $x$. 
    \item For each transformation, we update the vectors $d,\beta,u,v$
      as we have done for the QR chasing sweeps described in Section~\ref{sec:single-shift} and \ref{sec:double-shift}. 
\end{enumerate}
After $j$ steps of the above procedure, we obtain a matrix in upper
Hessenberg form, and step (iii) guarantees that we have a structured
representation of it. We note that replacing the last entries of 
$d,\beta,u,v,$ with the computed vectors yields a representation for 
the complete matrix. 

To clarify the algorithm, we pictorially describe the transformation on
a $5 \times 5$ example, which can be reduced in $6$ steps. Each arrow
represents a unitary similarity transformation by means on a Givens
rotation acting on the specified rows. 
{
\begin{align*} 
  &\begin{bmatrix}
    * & * & * & * & * \\
    \times & \times & \times & \times & \times \\
    \times &        & \times & \times & \times \\
    \times &        &        & \times & \times \\
    \times &        &        &        & \times \\
  \end{bmatrix} \xrightarrow{(4,5)} 
  \begin{bmatrix}
  * & * & * & * & * \\
  \times & \times & \times & \times & \times \\
  \times &        & \times & \times & \times \\
  \times &        &        & \times & \times \\
         &        &        & \times & \times \\
  \end{bmatrix} \xrightarrow{(3,4)} 
  \begin{bmatrix}
	* & * & * & * & * \\
	\times & \times & \times & \times & \times \\
	\times &        & \times & \times & \times \\
	       &        & \times & \times & \times \\
	       &        & \times & \times & \times \\
  \end{bmatrix}  \xrightarrow{(4,5)}  \\
  &\begin{bmatrix}
  * & * & * & * & * \\
  \times & \times & \times & \times & \times \\
  \times &        & \times & \times & \times \\
  &        & \times & \times & \times \\
  &        &        & \times & \times \\
  \end{bmatrix}  \xrightarrow{(2,3)} 
  \begin{bmatrix}
    * & * & * & * & * \\
    \times & \times & \times & \times & \times \\
           & \times & \times & \times & \times \\
           & \times & \times & \times & \times \\
           &        &        & \times & \times \\
\end{bmatrix} \xrightarrow{(3,4)} 
  \begin{bmatrix}
* & * & * & * & * \\
\times & \times & \times & \times & \times \\
       & \times & \times & \times & \times \\
       &        & \times & \times & \times \\
       &        & \times & \times & \times \\
\end{bmatrix}\xrightarrow{(4,5)} \\
&
\begin{bmatrix}
* & * & * & * & * \\
\times & \times & \times & \times & \times \\
       & \times & \times & \times & \times \\
       &        & \times & \times & \times \\
       &        &        & \times & \times \\
\end{bmatrix}.
\end{align*}}
Working with structured arithmetic requires to update only the 
diagonal, a few super- and sub-diagonal entries (as in Section~\ref{sec:single-shift}), the first column, and the vectors $u,v$. 
The upper triangular part is implicitly updated by keeping track
of these changes. 

In particular, it is never necessary to perform updates on $A_{12}$, since this 
is only implicitly determined by the four vector representation. 

\begin{remark}
  From the point of view of computational complexity
	the use of the structured  QR algorithm in AED for the computation of the Schur form of the trailing principal submatrix is not of paramount
	importance; we may compute the Schur form of $A_{22}$ by 
  an unstructured QR iteration, and then recover the updated 
  $A_{12}$ a posteriori thanks to the structure of the matrix.   
  Since the size of $A_{22}$ is expected to be negligible 
  with respect to $n$, this would not change the overall complexity. 
	Nevertheless, we prefer to work directly 
	on the structured representation using the same operations
	described in Section~\ref{sec:single-shift} because doing otherwise
	would require recovering the structure from the matrix, complicating the
	backward error analysis. Indeed, we will
	show in Section~\ref{backward} 
	how working directly on the structure can be extremely beneficial
	from the error analysis perspective. 
\end{remark}

\subsection{Parallel structured QR}\label{parallel}

In Section~\ref{AeD1} we discussed the AED procedure; this enables
the early deflation of eigenvalues, but also produces a larger number
of shifts for later use in QR sweeps 
with respect to more traditional criteria such as the Wilkinson or 
Rayleigh shifts. These can be used in several ways. 

When given $m$ shifts $\mu_1,\dots,\mu_m$ that approximate $m$  eigenvalues of $A$ we can start the QR algorithm using $\rho(x):=(x-\mu_1)\cdots(x-\mu_m)$ as shift polynomial. This process is theoretically analogous to applying consecutively  the single shift algorithm using the shifts $\mu_i$, but using a shift polynomial of degree $m$ we can deflate $m$ eigenvalues in a single step.
This can be implemented in the structured representation by mimicking
the extension done for going from single- to double- shift code, 
described in Section~\ref{sec:double-shift}. This may help with
achieving better cache usage on modern processors \cite{karlsson2014optimally}, and is indeed 
used in LAPACK's QR code by 
tightly packing several bulges
\cite{braman2002multishift,braman2002multishift2}. 

In this work we describe an alternative strategy for effectively
using these shifts, which is not trivially implemented in the
unstructured QR iteration: parallelization. We discuss how exploiting
the structure makes this task much easier. 

In practice, we choose to work with small degree shift polynomials (usually of degree 1 or 2, and hence small bulges), but we allow to start the following chasing step (with a new shift) before the end of the previous one. 
This choice allows to exploit modern processors which are often composed of multiple cores.

The scheme can be described as follows. Assume to have $p$ processors, 
with $p > 1$. The algorithm chooses a first shift\footnote{In principle, the shifts
	may be obtained using any suitable method; 
we will obtain them from the AED scheme, which provides a set of 
good shifts
that fit well in this framework.}, and starts chasing
it down to the bottom of the matrix. Before the end of the sweep, using another processor, we introduce a new bulge at the top and start chasing
that one as well. 

The idea is difficult to implement in the unstructured QR iteration, 
because of data dependencies. 
Indeed, when applying a Givens rotator on the left (resp. on the right), we modify
entries in all columns (resp. all rows) of the matrix. So there may be 
entries modified by two processors at the same time, and this creates
the need for careful data synchronization. Let us clarify this matter
with an example. Assume that a chasing step is started before the previous bulge reaches the bottom right corner. 
That is, a Givens rotator is applied on the left 
and performs a linear combination of the rows $i$ and $i+1$; at
the same time, using another processor, a Givens rotator is applied 
on the right combining the $j$-th and the $(j+1)$-th column, for $j > i + 1$. 
The entries $a_{i,j},a_{i+1,j},a_{i,j+1},a_{i+1,j+1}$ are modified at the same time by two different processors. Hence, this approach would 
require to employ expensive synchronization
techniques. 

Exploiting the vector representation automatically solves this issue. Since the upper triangular part is never explicitly updated, and we 
only need to work on the entries around the diagonal and the vectors
$u,v$, there is no need for synchronization. Hence, it is possible 
to start a number of bulges at the top separated one from the other, 
and chase them in parallel to the end of the matrix without further
complications. The differences between data dependency in 
a parallel chasing for 
the structured and unstructured QR are depicted in Figure~\ref{fig:parallel-qr}. 

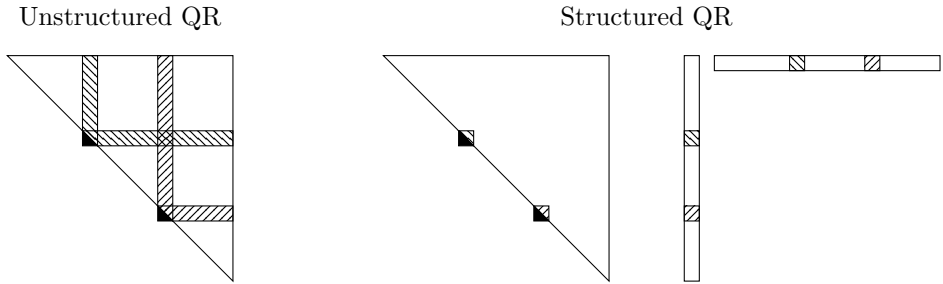
\begin{figure}
	\centering 
	\begin{tikzpicture}
	    \node at (1.5, 3.5) {Unstructured QR};
		\draw (0,3) -- (3,3) -- (3,0) -- cycle;
		\filldraw (1,2) -- (1,1.8) -- (1.2,1.8);
		\filldraw (2,1) -- (2,0.8) -- (2.2,0.8);
		\filldraw[pattern=north west lines] (1,2) -- (1.2, 1.8) --
		  (3,1.8) -- (3,2) -- cycle;
		\filldraw[pattern=north west lines] (1,2) -- (1.2,1.8) --
		  (1.2,3) -- (1.0,3) -- cycle;
		\filldraw[pattern=north east lines] 
		    (2,1) -- (2.2,0.8) -- (2.2,3) -- (2.0,3) -- cycle;		  
		\filldraw[pattern=north east lines] (2,1) -- (2.2,0.8)
		    -- (3,0.8) -- (3,1) -- cycle;
		\node at (8.5, 3.5) { Structured QR };
		\draw (5,3) -- (8,3) -- (8,0) -- cycle;
		\filldraw (6,2) -- (6,1.8) -- (6.2,1.8);
		\filldraw (7,1) -- (7,0.8) -- (7.2,0.8);
		\draw (9,0) rectangle (9.2,3);
		\draw (9.4,3) rectangle (12.4,2.8);
		\filldraw[pattern=north west lines]
		  (6,2) -- (6.2,1.8) -- (6.2,2) -- cycle;
		\filldraw[pattern=north west lines] 
		  (9,2) rectangle (9.2,1.8);
		\filldraw[pattern=north west lines] 
		  (10.4,3) rectangle (10.6,2.8);
		\filldraw[pattern=north east lines]
		  (7,1) -- (7.2,0.8) -- (7.2,1) -- cycle;
		\filldraw[pattern=north east lines] 
		  (9,1) rectangle (9.2,0.8);
		\filldraw[pattern=north east lines] 
		  (11.4,3) rectangle (11.6,2.8);		  
	\end{tikzpicture}
	\caption{Pictorial description of the parallel implementation
		of chasing multiple bulges in the unstructured and structured 
		QR. On the left, the entries in the matrix that need to be
	explicitly updated for chasing the bulges down one step in the 
unstructured QR are marked with diagonal lines; on the right, the
same is done for the structured QR, where only diagonal and subdiagonal entries (along with
the rank $1$ correction) are updated explicitly. The vectors representing
the rank $1$ correction are depicted as an outer product of tall and
thin matrices. }
\label{fig:parallel-qr}
\end{figure}

	\section{Stability analysis} \label{backward}
	
This section is devoted to the analysis of backward
stability of the proposed 
approach for computing the roots of a polynomial expressed in the
Chebyshev basis. 

To make the analysis rigorous, it is crucial to specify what 
we consider to be the input of the algorithm; for instance, the 
algorithm might be stable when
considered as an eigensolver, and unstable when considered
as a polynomial rootfinder. We comment on this fact in Section~\ref{sec:eigensolver-polynomial-stability}. 

We analyze the stability under different viewpoints; we show that the
method is stable when considered as an eigensolver, assuming that the
input is a colleague linearization of a polynomial. This yields a 
slightly stronger backward stability results compared to the original
analysis of \cite{eidelman2008efficient}, which we discuss in 
Section~\ref{sec:eigensolver-stability}. In particular, this implies 
that the method shares the same favorable properties of the unstructured QR. Then, we consider stability as a polynomial rootfinder. This is a property that the unstructured QR does not possess in general: the computed eigenvalues are eigenvalues of a closeby matrix, but not 
necessarily roots of a closeby polynomial. We show that, under an 
additional verifiable condition, we can prove that our method is stable as a polynomial rootfinder as well; this additional condition cannot be guaranteed in
general --- but is easily checkable at runtime, and it holds in 
typical cases arising from the polynomial approximations of smooth
functions on $[-1, 1]$. This is discussed in Section~\ref{sec:polynomial-stability}, and demonstrated 
on practical examples in 
Section~\ref{sec:numexp}. 

	Throughout this section, 
	we use the notation $x \lesssim \epsilon$ to mean that 
	$x$ is smaller than $\epsilon$ up to a (low-degree)
	polynomial in the size of the problem.
	The latter will be the degree of the polynomial, or the leading dimension
	of the colleague
	linearization. From now on, when not explicitly specified, 
	$\norm{\cdot}$ will be either the spectral norm (for matrices), or the
	Euclidean one (for vectors).
	
	We use the notation $\norm{p}$ to denote the norm of the vector
	containing the coefficients of the (monic) polynomial 
	under consideration. We remark that whenever we write 
	$\norm{\delta p} \lesssim \norm{p} \epsilon_m$ the constant and 
	the polynomial dependency on $n$ hidden in the $\lesssim$ notation need to be 
	independent of the polynomial $p$ under consideration. 
	If there is some additional dependency on $p$, 
	as it will happen in Section~\ref{sec:polynomial-stability}
	with the term $\hat\gamma_j(p)$, then this is 
	explicitly reported.

	\subsection{Polynomial roots through eigenvalues of colleague matrices}
	\label{sec:eigensolver-polynomial-stability}
	
	Approximating roots of polynomials by computing the eigenvalues of 
	their linearization is the method of choice in most practical
	cases. Indeed, this is the way most mathematical software implements
	commands for this task, such as for MATLAB's \texttt{roots} command. 
	
	For this reason, the question of whether computing the roots of polynomials using the 
	QR method is stable has been deeply analyzed in the literature 
	\cite{edelman1995polynomial,de2016backward,nakatsukasa2016stability,noferini2019structured,dopico2018block,noferini2017chebyshev}.
	It is known
	that the QR method implemented in floating point with the usual
	model of floating point errors applied to a matrix $A \in \mathbb C^{n \times n}$ computes a Schur form $T$ satisfying
	\[
	  Q^* (A + \delta A) Q = T, \qquad 
	  \norm{\delta A} \lesssim \norm{A} \cdot \epsilon_m. 
	\]
	If $A$ is a companion linearization for a polynomial
	expressed in the monomial basis, 
	Edelman and Murakami proved in the seminal paper \cite{eidelman2008efficient} that $A + \delta A$ has 
	as eigenvalues the roots of $p + \delta p$, with 
	$\norm{\delta p} \lesssim \norm{p}^2 \epsilon_m$. Here, both $p$ and 
	$p + \delta p$ are assumed to be monic, and we denote by $\norm{p}$ the norm of the vectors of coefficients of $p(x)$. More recently, 
	it has been shown that the same result holds for colleague linearizations as well \cite{nakatsukasa2016stability}. 
	
	This result is satisfactory if $\norm{p}$ is moderate. Otherwise, 
	one has to resort to different strategies. One possibility is to perform
	a preliminary scaling of the matrix $A$, considering a diagonal
	scaling with a matrix $D$ computed in order to minimize the norm of $D^{-1} A D$. 
	This choice is often effective, but is not guaranteed 
	to be backward stable; counterexamples where it
	gives inaccurate results can be found even when generating test
	polynomials with random coefficients 
	\cite{aurentz2018fast}. Nevertheless, this is the chosen method for 
	MATLAB's roots commands, as it behaves ``well enough'' in practice
	for polynomial with moderate norms, 
	and sometimes balancing can deliver lower forward errors than a perfectly (backward) stable algorithm. 
	
	An alternative choice is to rely on the QZ algorithm, in place of QR. This
	amounts to computing the generalized Schur form of a pencil $A - \lambda B$, 
	where $A$ and $B$ contain the coefficients of the polynomial $p(x)$; 
	this algorithm has the same properties of the QR iteration, so the computed 
	generalized Schur form $S - \lambda T$ is the exact one of $A + \delta A - \lambda (B + \delta B)$, where $\delta A, \delta B$ are relatively small compared to $A$ and $B$, respectively. This 
	guarantees that the error on the polynomial coefficients is bounded by 
	$\norm{\delta p} \lesssim \norm{p}^2 \epsilon_m$; however, 
	in this case the polynomial is not forced to be monic so it can always be scaled
	to have $\norm{p} = 1$, which solves the issue. We refer the reader to
	\cite{aurentz2018fast} for further details. 
	
	\subsection{Stability as an eigensolver}
	\label{sec:eigensolver-stability}
	
	The algorithm proposed in this work is an improved version of 
	the one in \cite{eidelman2008efficient}. For the purposes of 
	backward error analysis, we can analyze it as a sequence
	of unitary similarities described by Givens rotations; if each
	of these creates a small $\mathcal O(\epsilon_m)$ backward
	error, the whole algorithm is backward stable by composition. 	
	Indeed, we can bound the number of rotations to perform with
	a polynomial in the size of the problem by assuming that 
	the QR iteration converges in a predictable number of steps. 
	
	The improvements introduced in this work (namely, AED and 
	parallelization) rely on the same elementary operations, 
	so the backward stability result found in \cite{eidelman2008efficient}
	applies to the presented algorithm as well. 
	More specifically, we recall  \cite[Theorem~4.1]{eidelman2008efficient}
		
	\begin{theorem}[from \protect{\cite{eidelman2008efficient}}]
		\label{thm:gemignani}
		Let $A = F + uv^*$, where $F = F^*$ and $u,v \in \mathbb C^{n \times k}$. 
		Let $T$  be the Schur form computed according to the Algorithm described in Section~\ref{sec:structured-qr}. 
		Then, there exists a unitary matrix $Q$ such that 
		\[
		  T = Q^* (A + \delta A) Q, \qquad 
		  \norm{\delta A}_F \lesssim \left(
		    \norm{F}_F + \norm{u}_2 \norm{v}_2
		  \right)\epsilon_m. 
		\]
	\end{theorem}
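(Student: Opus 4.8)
The plan is to regard the computed Schur form $T$ as the output of a long sequence of elementary unitary similarities --- the Givens rotations produced by the bulge-chasing sweeps of Section~\ref{sec:structured-qr} (together with, for the improved algorithm, the rotations used inside the AED reductions) --- and to show that each such step, performed in floating point on the four generators $d,\beta,u,v$, amounts to an \emph{exact} unitary similarity applied to a matrix perturbed by some $\delta A^{(\ell)}$ with
\[
  \norm{\delta A^{(\ell)}}_F \lesssim \bigl( \norm{F}_F + \norm{u}_2 \norm{v}_2 \bigr)\epsilon_m .
\]
The theorem then follows by composition. If $\tilde G_1,\dots,\tilde G_N$ denote the (exactly unitary) rotations actually carried out and $Q := \tilde G_1 \cdots \tilde G_N$, a standard telescoping of the per-step identities $X^{(\ell+1)} = \tilde G_\ell^* (X^{(\ell)} + \delta A^{(\ell)}) \tilde G_\ell$ gives $T = Q^*(A + \delta A)Q$ with $\delta A = \sum_\ell \hat E_\ell$, where each $\hat E_\ell$ is a unitary conjugate of $\delta A^{(\ell)}$ and hence has the same Frobenius norm; the count $N = \mathcal O(n^2)$ of rotations (finite under the usual assumption that the iteration converges in $\mathcal O(n)$ sweeps) is absorbed into the polynomial factor hidden in $\lesssim$. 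The reason the bound stays meaningful along the whole iteration is that a unitary similarity preserves, in exact arithmetic, the three quantities $\norm{F}_F$, $\norm{u}_2$, $\norm{v}_2$ (because $QFQ^*$ is again Hermitian with unchanged Frobenius norm, and $Qu$, $Qv$ keep their norms), so $\norm{F}_F + \norm{u}_2\norm{v}_2$ drifts only by a relative $\mathcal O(\epsilon_m)$ per step.

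The core of the argument is the per-step analysis. When a rotation $G$ acting on two consecutive indices is applied to the structured data (Algorithm~\ref{single_bulge_chasing} and the analogous double-shift and AED steps), the rounding errors come from three places: (i) computing $G$ from two entries and applying it to the small $3\times 2$ (or $4\times 3$) block of diagonal, sub- and super-diagonal entries; (ii) \emph{reconstructing} from the generators, via \eqref{g2.2}, the super-diagonal entries $\gamma_i = (\conj{\beta_i} - \conj{u_{i+1}} v_i) + u_i \conj{v_{i+1}}$ and the strictly-upper entries $u_i \conj{v_j} - \conj{u_j} v_i$ that the rotation needs; and (iii) the updates $u \gets G^* u$, $v \gets G^* v$. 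Case (iii) is the standard backward analysis of applying a Givens rotation: the computed vectors are the exact $\tilde G^*$-images of $u + \delta u$, $v + \delta v$ with $\norm{\delta u}_2 \lesssim \norm{u}_2 \epsilon_m$, $\norm{\delta v}_2 \lesssim \norm{v}_2 \epsilon_m$, contributing to $\delta A^{(\ell)}$, through the rank correction, a term of norm $\lesssim \norm{u}_2\norm{v}_2\epsilon_m$. Case (i) transforms a block whose entries are themselves reconstructed (one of them is a $\gamma_i$), so its norm is $\lesssim \norm{F}_F + \norm{u}_2\norm{v}_2$ and the rotation contributes a perturbation of that order. The delicate part is (ii): an entry $u_i\conj{v_j} - \conj{u_j}v_i$ may undergo severe cancellation, so it is \emph{not} controlled by $\norm{A}$; nonetheless its floating-point value differs from the exact one by at most a multiple of $(|u_i||v_j| + |u_j||v_i|)\epsilon_m$, and accumulating such discrepancies over a full row and column produces a perturbation of Frobenius norm $\lesssim \norm{u}_2\norm{v}_2\epsilon_m$. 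Crucially, this discrepancy can be re-attributed, in a structure-preserving way, to a relative-$\epsilon_m$ perturbation of the generators $u,v$ together with an $\norm{F}_F\epsilon_m$ perturbation of the Hermitian part, so that the matrix emerging from the step is again genuinely Hermitian-plus-rank-$k$ and the analysis can be repeated.

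Collecting (i)--(iii), each elementary step equals $\tilde G_\ell^*(X^{(\ell)} + \delta A^{(\ell)})\tilde G_\ell$ with $\norm{\delta A^{(\ell)}}_F \lesssim (\norm{F}_F + \norm{u}_2\norm{v}_2)\epsilon_m$; summing over all sweeps and all AED reductions yields $T = Q^*(A+\delta A)Q$ with $\norm{\delta A}_F \lesssim (\norm{F}_F + \norm{u}_2\norm{v}_2)\epsilon_m$. The main obstacle is exactly estimate (ii): one must certify that the error committed each time the \emph{implicitly stored} upper triangle is reconstructed stays bounded by $\norm{u}_2\norm{v}_2$ --- rather than by the much larger quantities that can appear entrywise after cancellation --- and that this error splits into structured perturbations of $F$, $u$ and $v$ separately, preserving the Hermitian-plus-rank-$k$ form and hence the validity of the same bound at the next step. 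This is the place where the proof genuinely exploits the structured representation and diverges from the textbook backward analysis of the unstructured QR iteration, which instead yields the sharper $\norm{\delta A}\lesssim\norm{A}\epsilon_m$. The complete verification is carried out in \cite{eidelman2008efficient}.
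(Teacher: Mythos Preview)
The paper does not actually prove this theorem: it is stated verbatim as a citation of \cite[Theorem~4.1]{eidelman2008efficient} and used as a black box, so there is no ``paper's own proof'' to compare against. Your sketch is a faithful outline of the standard argument one finds in that reference --- telescoping over the elementary rotations, bounding the per-step backward error on the generators, and observing that $\norm{F}_F$, $\norm{u}_2$, $\norm{v}_2$ are (essentially) invariant under exact unitary similarity --- and you correctly identify the reconstruction of the implicit upper triangle via \eqref{g2.2} as the place where the $\norm{u}_2\norm{v}_2$ term (rather than $\norm{A}$) enters. Since you yourself defer the full verification to \cite{eidelman2008efficient}, this is entirely in line with how the paper treats the result.
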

	
	We remark that the algorithm of 
	\cite{eidelman2008efficient} is stated in more generality, and not 
	only for 
	colleague linearizations. In this more general context, it might happen\footnote{As an elementary counterexample, consider
		$F = -e_1 e_1^*$ and $u=v=e_1$, where $\norm{u}_2 = \norm{v}_2 = \norm{F}_2 = 1$, and $\norm{A}_F = 0$.} that 
	$\norm{u}_2 \norm{v}_2 \not\lesssim \norm{A}_F$, so this theorem
	does not guarantee backward stability with respect to $\norm{A}$. 
	
	Nevertheless, 
	the additional hypotheses coming from considering only 
	colleague matrices allow to state a
	stronger result. 
	
	\begin{theorem} \label{thm:stability-eigensolver}
		Let $A = F + uv^*$ be the colleague linearization of a degree $n$
		polynomial $p(x)$ expressed in the Chebyshev basis, as in 
		\eqref{eq:colleague}, and $T$ its
		approximate Schur form computed using the QR iteration
		described in Section~\ref{sec:structured-qr}. Then, 
		it exists $\delta A$ such that 
		\[
		  T = Q^* (A + \delta A) Q, \qquad 
		  \norm{\delta A} \lesssim \norm{A} \cdot \epsilon_m
		\]
	\end{theorem}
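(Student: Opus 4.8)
The goal is to upgrade Theorem~\ref{thm:gemignani} from a bound in terms of $\norm{F}_F + \norm{u}_2\norm{v}_2$ to one in terms of $\norm{A}$, using the special structure of the colleague matrix \eqref{eq:colleague}. The plan is to invoke Theorem~\ref{thm:gemignani} verbatim to get $T = Q^*(A+\delta A)Q$ with $\norm{\delta A}_F \lesssim (\norm{F}_F + \norm{u}_2\norm{v}_2)\epsilon_m$, and then to argue that for a colleague linearization each of the three quantities $\norm{F}_F$, $\norm{u}_2$, $\norm{v}_2$ is bounded by a modest multiple of $\norm{A}$ (equivalently of $\norm{A}_F$, since all norms used are equivalent up to factors polynomial in $n$, which we are allowed to absorb into $\lesssim$). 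Since $\norm{A} \le \norm{F}_F + \norm{u}_2\norm{v}_2$ is automatic, the content is the reverse inequality.

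First I would record the explicit generators of the colleague matrix from \eqref{eq:colleague}: $F$ is the fixed tridiagonal matrix with entries $\tfrac12$ on the off-diagonals (and a $\tfrac{\sqrt2}{2}$ in the corner), so $\norm{F}_F$ is an absolute constant (it does not depend on $p$ at all), hence trivially $\norm{F}_F \lesssim \norm{A}$ provided $\norm{A} \gtrsim 1$. The vectors are $u = -\tfrac{1}{2p_n} e_1$ and $v = (p_{n-1}, p_{n-2}, \dots, p_1, \sqrt 2 p_0)^\top$, so $\norm{u}_2 = \tfrac{1}{2|p_n|}$ and $\norm{v}_2 \le \sqrt2\,\norm{(p_0,\dots,p_{n-1})}_2 \le \sqrt 2\,|p_n|\cdot\norm{p}$ where $\norm{p}$ denotes (as in the paper's convention) the norm of the \emph{monic} coefficient vector, i.e.\ after dividing through by $p_n$. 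Hence $\norm{u}_2\norm{v}_2 \lesssim \norm{p}$.

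Next I would connect $\norm{p}$ to $\norm{A}$. The last column of $uv^*$ is $-\tfrac{\sqrt2 p_0}{2p_n}e_1$ and more generally the first row of the rank-one part is $-\tfrac{1}{2p_n}(p_{n-1},\dots,p_1,\sqrt2 p_0)$; since $F$ contributes only a single $\tfrac12$ to the first row (in position $(1,2)$), the first row of $A$ is essentially the vector of normalized coefficients $-\tfrac{1}{2p_n}(p_{n-1}-1\cdot\text{(shift)},\dots)$, up to $O(1)$ additive terms. Consequently $\norm{A} \ge \norm{A_{1,:}}_2 \gtrsim \norm{p} - O(1)$. Combining: if $\norm{p}\gtrsim 1$ then $\norm{u}_2\norm{v}_2 \lesssim \norm{p} \lesssim \norm{A}$, and $\norm{F}_F = O(1) \lesssim \norm{A}$; if instead $\norm{p} \lesssim 1$, then $\norm{A} \asymp 1$ (it is bounded above by $O(1)$ and below by the norm of its subdiagonal, which comes from $F$ and is $\asymp 1$) and again all three terms are $O(1) \lesssim \norm{A}$. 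Either way $\norm{F}_F + \norm{u}_2\norm{v}_2 \lesssim \norm{A}$, and substituting into Theorem~\ref{thm:gemignani} gives the claim.

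The main obstacle — really the only subtle point — is handling the regime where $\norm{A}$ is \emph{not} bounded below by a constant, which a priori could happen if the polynomial is such that its normalized coefficients are all tiny; one must check that this forces $\norm{p}$ small too and that $F$'s fixed subdiagonal then pins $\norm{A}$ from below by an absolute constant, so no degenerate cancellation occurs. I would also double-check that the $\tfrac{1}{\sqrt2}$ scaling chosen in \eqref{eq:colleague} to symmetrize the colleague matrix does not spoil the comparison of $\norm{v}_2$ with $\norm{p}$ (it only introduces a factor $\sqrt2$, harmless under $\lesssim$). Everything else is a routine chain of norm equivalences absorbed into the $\lesssim$ notation.
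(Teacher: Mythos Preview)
Your argument is correct and follows essentially the same route as the paper's proof: invoke Theorem~\ref{thm:gemignani} and then show that for the colleague matrix one has $\norm{F} + \norm{u}_2\norm{v}_2 \lesssim \norm{A}$, using that $F$ has norm bounded independently of $p$ and that $\norm{A}$ is bounded below by an absolute constant thanks to the fixed subdiagonal. The paper's execution is a bit more direct: rather than detouring through $\norm{p}$ and the first row of $A$, it simply writes $\norm{u}_2\norm{v}_2 = \norm{uv^*}_2 = \norm{A-F}_2 \le \norm{A}_2 + 1$ (taking $u=e_1$ and using $\norm{F}_2\le 1$, since $F$ is normal with spectrum in $[-1,1]$), which immediately yields $\norm{\delta A}\lesssim (2+\norm{A})\epsilon_m \lesssim \norm{A}\epsilon_m$.
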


	\begin{proof}
		In view of \cite[Theorem~4.1]{eidelman2008efficient}, here
		restated as Theorem~\ref{thm:gemignani}, we have the bound 
		\[
		  \norm{\delta A} \lesssim \left(
		  \norm{F} + \norm{u} \norm{v}
		  \right)\epsilon_m,
		\]
		where we have replaced Frobenius norms with $2$-norms, since they
		are equivalent up to a polynomial in $n$. Since $F$ is the colleague
		linearization of $T_n(x)$, it has as eigenvalues the Chebyshev points
		inside $[-1, 1]$. $F$ is normal, so we have $\norm{F} \leq 1$. In addition, 
		$u = e_1$ and therefore $\norm{u} = 1$. Since for rank $1$ matrices
		and the Euclidean and spectral norm
		it holds $\norm{uv^*} = \norm{u} \norm{v}$, we have
		\[
		  uv^* = A - F \implies \norm{v} \leq 1 + \norm{A}. 
		\]		
		Combining these bounds we 
		get 
		\[
		\norm{\delta A} \lesssim \left(
		1 + \norm{v}
		\right)\epsilon_m \lesssim (2 + \norm{A}) \epsilon_m \lesssim 
		\norm{A} \epsilon_m.
		\]
	\end{proof}
	
	\subsection{Stability as a polynomial rootfinder}
	\label{sec:polynomial-stability}
	
	Theorem~\ref{thm:stability-eigensolver} guarantees that the 
	computed eigenvalues are the exact ones of a slightly
	perturbed matrix. However, if we are given a polynomial
	$p(x)$ is natural to ask if these are also the roots of 
	a closeby polynomial $p + \delta p$, satisfying $\norm{\delta p} \lesssim \norm{p} \epsilon_m$.
	
	The following result generalizes the bound on the norm of the perturbation, that holds for the unstructured QR iteration \cite{nakatsukasa2016stability}.

	\begin{theorem} \label{thm:stability-polynomial-weak}
		Let $p(x)$ be a monic polynomial in the Chebyshev basis. Then, the
		roots obtained by computing the eigenvalues of the colleague
		linearization \eqref{eq:colleague} relying on the algorithm of Section~\ref{sec:structured-qr} are the
		roots of $p(x) + \delta p(x)$ where 
		\[
		  \norm{\delta p} \lesssim \norm{p}^2 \epsilon_m, 
		\]
		where $\epsilon_m$ is the unit roundoff, and $\norm{\cdot}$ denotes
		the norm of the vector of coefficients. 
	\end{theorem}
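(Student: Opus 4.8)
The plan is to reduce the statement to the combination of two ingredients: the eigensolver backward stability already established in Theorem~\ref{thm:stability-eigensolver}, and the classical transfer of a matrix backward error into a polynomial backward error for colleague (confederate) linearizations, which is the Chebyshev counterpart of the Edelman--Murakami bound \cite{edelman1995polynomial,nakatsukasa2016stability}. In other words, we do \emph{not} try to re-do the error analysis of the QR sweeps; we use Theorem~\ref{thm:stability-eigensolver} as a black box and only argue about how a perturbation of the colleague matrix affects the polynomial it linearizes.

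First I would invoke Theorem~\ref{thm:stability-eigensolver}: the computed roots are the exact eigenvalues of $\widehat A := A + \delta A$, where $A$ is the colleague linearization \eqref{eq:colleague} of $p$ and $\norm{\delta A}\lesssim\norm{A}\epsilon_m$. Since $p$ is monic, i.e.\ $p_n=1$, the rank-one generator of $A$ in \eqref{eq:colleague} is $v=-\tfrac12(p_{n-1},\dots,p_1,\sqrt2 p_0)$, so $\norm{v}\lesssim\norm{p}$, whence $\norm{A}\le\norm{F}+\norm{u}\norm{v}\lesssim 1+\norm{p}\lesssim\norm{p}$ (using $\norm{p}\ge|p_n|=1$). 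Consequently $\norm{\delta A}\lesssim\norm{p}\,\epsilon_m$. Note that $\delta A$ need not preserve the colleague structure, which is exactly why the conclusion about $\delta p$ does not follow by simply reading it off the perturbed matrix.

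Second, I would show that the eigenvalues of $\widehat A$ --- a small, in general unstructured, perturbation of a colleague matrix --- are the roots of a monic Chebyshev polynomial $\widehat p = p+\delta p$ with $\norm{\delta p}\lesssim (1+\norm{p})\norm{\delta A}$. The characteristic polynomial of $A$ equals $p$ up to a nonzero normalization constant $c_n$ depending only on $n$; expanding to first order,
\[
  \det(xI-\widehat A) = \det(xI-A) - \sum_{i,j}(\delta A)_{ij}\,M_{ij}(x) + O(\norm{\delta A}^2),
\]
where $M_{ij}(x)$ is the $(i,j)$ cofactor of $xI-A$, a polynomial of degree at most $n-1$. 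The leading Chebyshev coefficient of $\det(xI-\widehat A)$ differs from $c_n$ by $O(\norm{\delta A})=O(\epsilon_m)$, so dividing by it to obtain a monic $\widehat p$ only perturbs the hidden constants. The decisive estimate is that, expressed in the Chebyshev basis, every cofactor $M_{ij}(x)$ has coefficient vector of norm $\lesssim c_n(1+\norm{p})$: in $xI-A$ all entries are $O(1)$ except for a single row carrying the coefficients of $p$, so each $(n-1)\times(n-1)$ minor is a sum of products containing at most one factor of size $O(\norm{p})$, and re-expanding such products in the Chebyshev basis inflates coefficients only by a factor polynomial in $n$. This bound is carried out in detail in \cite{nakatsukasa2016stability} (extending \cite{edelman1995polynomial} from the monomial case), and since our backward error, viewed abstractly, is exactly an unstructured perturbation of a colleague matrix, that analysis applies verbatim; after cancelling the common factor $c_n$ it gives $\norm{\delta p}\lesssim(1+\norm{p})\norm{\delta A}$, the quadratic term being negligible because $\norm{\delta A}=O(\epsilon_m)$.

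Combining the two steps, $\norm{\delta p}\lesssim(1+\norm{p})\norm{\delta A}\lesssim(1+\norm{p})\norm{p}\,\epsilon_m\lesssim\norm{p}^2\epsilon_m$, using $\norm{p}\ge1$. The genuinely delicate point --- the main obstacle --- is the middle estimate on the Chebyshev coefficients of the cofactors $M_{ij}(x)$, in particular checking that the (exponentially small in $n$) normalization constant $c_n$ enters as a common factor and therefore cancels rather than spoiling the bound; but this is precisely the content of \cite{nakatsukasa2016stability}, so no new computation is needed here and we may cite it.
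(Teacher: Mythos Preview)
Your proposal is correct and follows essentially the same approach as the paper: invoke Theorem~\ref{thm:stability-eigensolver} to obtain $\norm{\delta A}\lesssim\norm{A}\epsilon_m$, then cite \cite{nakatsukasa2016stability} (specifically Corollary~2.8 there) to transfer this matrix backward error into the polynomial bound $\norm{\delta p}\lesssim\norm{p}^2\epsilon_m$. The only difference is that you spell out the intermediate estimate $\norm{A}\lesssim\norm{p}$ and sketch the cofactor argument underlying the cited result, whereas the paper simply cites it; your extra detail is fine and does not change the argument.
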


	\begin{proof}
		Thanks to Theorem~\ref{thm:stability-eigensolver} we have that the computed eigenvalues are the ones of $A + \delta A$, with
		$\norm{\delta A} \lesssim \norm{A} \epsilon_m$. The result follows by \cite[Corollary~2.8]{nakatsukasa2016stability}. 
	\end{proof}

	If the polynomial $p(x)$ under consideration has
	coefficients of moderate norm, 
	the previous result is satisfactory. Otherwise, the quadratic
	term $\norm{p}^2$ suggests that instabilities may arise. 
	In fact, the hypothesis $\norm{p} \approx 1$ 
	is far from being satisfied in most cases of practical
	interest. Indeed, 
	if we interpolate an analytic function $f(x)$ at the Chebyshev points, as we describe in Section~\ref{sec:analytic-rootfinder}, 
	we expect the magnitude of its coefficients to decay exponentially 
	with the degree; when --- after truncation --- we normalize the polynomial to make it 
	monic the norm of its coefficient vector is bound to 
	become very large. 
	Hence, one may expect the method to be completely unreliable for
	the problem of analytic rootfinding. 
	
\begin{figure}
\centering
\subfloat[Comparison of the errors produced on the roots by the Matlab command {\tt eig} without balancing and by {\tt \fastqr}.]
  {\begin{tikzpicture}
\begin{semilogyaxis}[
  title = {},  
  ylabel = {Error},
     x tick label style={/pgf/number format/.cd,%
          scaled x ticks = false,
          set thousands separator={},
          fixed},
  legend pos=south east,
  ymajorgrids=true,
    grid style=dashed,]
    \addplot[only marks,red,mark size=.5pt,mark=square*] table [x expr=\coordindex+1, y expr=abs(\thisrowno{0})]{err_eignb.dat};
   \addlegendentry{{\tt eig\_nb}}
    \addplot[only marks,blue,mark size=.5pt,mark=*] table [x expr=\coordindex+1, y expr=abs(\thisrowno{0})]{err_fastqr.dat};
  \addlegendentry{{\tt \fastqr}}
\end{semilogyaxis}
\end{tikzpicture}}
  \qquad
\subfloat[][Comparison of the errors produced on the roots by the Matlab command {\tt eig} and by {\tt \fastqr}.]
  {\begin{tikzpicture}
\begin{semilogyaxis}[
  title = {},  
  ylabel = {Error},
  legend pos=south east,
  ymajorgrids=true,
    grid style=dashed,]
  \addplot[only marks,red,mark size=.5pt,mark=square*] table [
     x expr=\coordindex+1, y expr=abs(\thisrowno{0})]{err_eig.dat};
     \addlegendentry{{\tt eig}}
    \addplot[only marks,blue,mark size=.5pt,mark=*] table [x expr=\coordindex+1,y expr=abs(\thisrowno{0})]{err_fastqr.dat};
  \addlegendentry{{\tt \fastqr}}
\end{semilogyaxis}
\end{tikzpicture}}
  \caption{Comparison of the errors produced during the computations of the zeros of $e^x\sin(800x)$ in $[-1,1]$ . The zeros are  computed finding the roots of the Chbyshev interpolant using a $QR$ algorithm with and without balancing and the algorithm {\tt \fastqr}. The exact zeros of the function are computed analitically.}
  \label{fig:stability-test}
\end{figure}
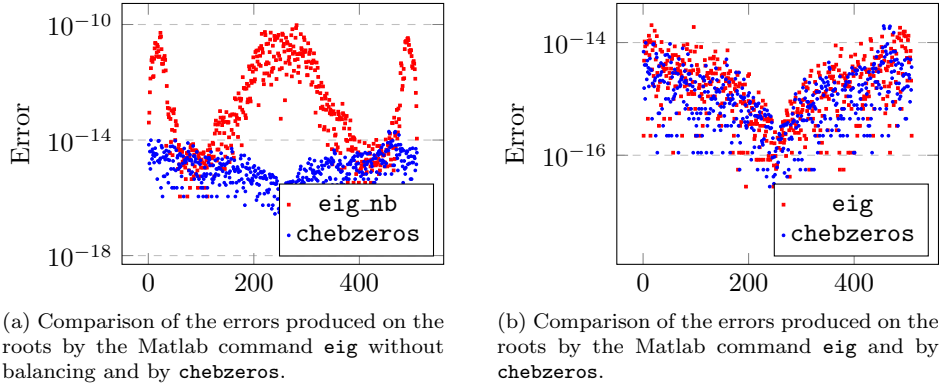

The bound given by Theorem~\ref{thm:stability-polynomial-weak}
only depends on the fact that the presented 
QR method is backward stable as an eigensolver. 
Mathematically, the method is equivalent to 
calling \texttt{eig(A, 'nobalance')} in MATLAB, 
where \texttt{A} is the colleague linearization for $p(x)$. 

We now compare these approaches on a simple
example: we approximate $f(x) = e^{x} \sin(800x)$ on $[-1, 1]$ by Chebyshev interpolation using Chebfun \cite{battles2004extension}, which yields
a polynomial of degree $891$. Then, we compute the roots using the  structured QR iteration described in Section~\ref{sec:structured-qr} (\fastqr), and the (balanced and  unbalanced) 
QR implemented
by the \texttt{eig} function in MATLAB.

The left plot of 
Figure~\ref{fig:stability-test} shows the 
absolute errors on the computed roots that are in $[-1, 1]$
(which are 
known exactly for $f(x)$). Clearly, the structured QR 
(identified by \fastqr) outperforms the unbalanced
QR (identified by \texttt{eig\_nb}) with respect
to accuracy, despite the mathematical equivalence
of these two approaches. Using balancing with \texttt{eig} 
greatly improves the accuracy, as shown by the right plot in Figure~\ref{fig:stability-test}, and yield about the same 
accuracy of the proposed algorithm (\fastqr).

The plots report forward errors, but for the theoretical
analysis we prefer to work
with backward errors instead. If $c$ is the vector
of coefficients of $p(x)$, given $n$ approximation to the roots
$y_j$, for $j = 1, \ldots, n$, 
we may define the relative backward error as follows:
\begin{equation} \label{eq:be}
  B(p; x) := \min_{\alpha \in \mathbb R} 
    \frac{\norm{c - \alpha \hat c}_2}{\norm{c}_2}, \qquad 
  p(x) = \sum_{j=0}^n c_j T_j(x), \qquad
  \prod_{j = 1}^n (x - y_j) = \sum_{j = 0}^n \hat c_j T_j(x).
\end{equation}
The number $B(p; x)$ measures the distance of the polynomial $p(x)$ to the closest
polynomial of the same degree which vanishes at the computed roots. The  backward
error for this example is around $1 \cdot 10^{-11}$
for both \fastqr\ and \texttt{eig}, but is about $5 \cdot 10^{-7}$ for 
\texttt{eig} without balancing. As already mentioned, 
this unexpected stability is in
contrast with the fact that \fastqr\ is mathematically equivalent to the
QR algorithm without balancing. 

The rest of this section is devoted to analyze the backward stability
of the approach in a polynomial sense, i.e., to derive bounds of the 
form
\begin{equation} \label{eq:polbe}
  \norm{\delta p} \lesssim C \norm{p} \epsilon_m. 
\end{equation}
We recall that, for the standard unbalanced QR iteration the results
by Edelman and Murakami \cite{edelman1995polynomial} yield 
$C \approx \norm{p}$. We show that in our approach the constant 
$C$ can be bounded with a quantity that depends on some features of 
the magnitude distributions in the vectors $u,v$ during the QR iteration, 
and this quantity is:
\begin{enumerate}[(i)]
	\item Explicitly computable: it can be given as output
	  by the algorithm at (almost) no additional cost. 
	\item Typically very small for the cases of interest, as we 
	  show in the numerical experiments.  
\end{enumerate}
Unfortunately, we can not guarantee $C$ to be always small. However, 
in our numerical experiments we manage to found only one example where
$C$ exhibits enough growth for the results to slightly deteriorate. Despite some
effort, we were not able to produce other examples that were not
small variations of the latter. 

\begin{definition}
	Let $u,v$ be two vectors in $\mathbb C^{n}$. We define  
	$\gamma_j(u, v)$ for any positive integer $j$ as the quantity:
	\[
	  \gamma_j(u,v) := \sup_{i = 1, \ldots, n-j}
	    \left\lVert \begin{bmatrix}
	    	u_i \\ \vdots \\ u_{\min\{ i+j+1, n\}}
	    	\end{bmatrix} \right\rVert_2
	    	\cdot 
	    	\left\lVert \begin{bmatrix}
	    	v_{\max\{1, i-1 \}} \\ \vdots \\ v_{i+j}
	    	\end{bmatrix} \right\rVert_2
	\]
\end{definition}
Intuitively, the above quantity bounds the norm of the submatrix
of  the 
rank one matrix $uv^*$ obtained selecting a $(j+1) \times (j+1)$ minor 
close to the diagonal. This is the submatrix whose entries are updated
during a chasing step. We denote by 
$u^{(k)}, v^{(k)}$ be the vectors obtained after
applying $k$ rotations in the algorithm described in Section~\ref{sec:structured-qr}, starting
from $u^{(0)} := u$ and $v^{(0)} := v$. We can now define
the supremum over all QR iterations of the quantities $\gamma_j(\cdot, \cdot)$, 
and we make use of the following notation:
\begin{equation} \label{eq:gammahatj}
	\hat{\gamma}_j(p)  := \sup_{k} \gamma_j(u^{(k)}, v^{(k)}), 
\end{equation}
where $k$ varies from $0$ to the number of rotations performed in 
the QR algorithm. We note that one has the trivial upper bound:
\[
  \hat{\gamma}_j(p) \leq \norm{u^{(k)}}_2 \norm{v^{(k)}}_2 
  =\norm{u}_2 \norm{v}_2  \lesssim \norm{p}, 
\]
thanks to the fact the Givens rotations preserve the norms throughout
the iterations, and $\norm{u}_2 = 1$, $\norm{v}_2 \approx \norm{p}_2$. 
We now show that the constant $C$ in \eqref{eq:polbe} can be bounded
using $\hat{\gamma}_j(p)$. In almost all cases, we will have that
$\hat\gamma_j(p) \ll \norm{p}_2$. 

To obtain this result, we build on a recent result in \cite{noferini2019structured}, which enables to bound the backward
error on $p$ by looking at the backward errors on the Hermitian
and rank $1$ part separately. We state this result (slightly simplified
and with the current notation) for 
completeness, and to better motivate the next developments. 

\begin{theorem}[from \cite{noferini2019structured}] 		
	\label{thm:chebfinal}
	Let $A = F + uv^*$ the linearization for a polynomial
	$p(x)$ expressed in the Chebyshev basis given by \eqref{eq:colleague}. 
	Consider perturbations
	$\norm{\delta F}_2 \leq \epsilon_F$, $\norm{\delta u} \leq \epsilon_u$,
	and $\norm{\delta v} \leq \epsilon_v$. Then, the matrix 
	$A + \delta A := F + \delta F + (u + \delta u) (v + \delta v)^*$ linearizes
	the polynomial $p + \delta p$, where 
	$\lVert \delta p\rVert \lesssim	
	\norm{v}_2 \epsilon_u + \epsilon_v + \norm{v}_2 \epsilon_F
	$.
\end{theorem}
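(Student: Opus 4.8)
The plan is to reduce the perturbed colleague matrix $A + \delta A = F + \delta F + (u+\delta u)(v+\delta v)^*$ to the standard colleague form, and then read off the corresponding perturbation on the Chebyshev coefficients. First I would expand the rank-one term as $(u+\delta u)(v+\delta v)^* = uv^* + u\,\delta v^* + \delta u\, v^* + \delta u\,\delta v^*$, and absorb the second-order term $\delta u\,\delta v^*$ together with $\delta F$ and $\delta u\, v^*$ into the analysis, keeping only first-order contributions. Since in the colleague linearization \eqref{eq:colleague} one has $u = e_1$ exactly (up to the scaling $-\tfrac{1}{2p_n}$), the term $\delta u\, v^*$ perturbs only the first row of $A$, while $u\,\delta v^*$ also perturbs only the first row; the term $\delta F$ must be handled carefully because a generic Hermitian perturbation of $F$ destroys the banded plus rank-one sparsity pattern of a colleague matrix. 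The key observation is that any matrix $A + \delta A$ close to a colleague matrix can be written again as a colleague matrix \emph{of a perturbed polynomial} only after one accounts for the fact that the entries below the first subdiagonal and above the first superdiagonal are forced to specific values; here I would invoke the fact (already used implicitly in Section~\ref{backward}) that the relevant perturbation on $p$ only sees the first row and the subdiagonal/diagonal band, since these are the entries that determine the Chebyshev coefficients in \eqref{eq:colleague}.

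Concretely, the first row of $A$ carries the coefficients $p_{n-1}, \ldots, p_1, \sqrt 2 p_0$ (scaled by $-\tfrac{1}{2p_n}$), and perturbing it by $\delta(\text{first row})$ changes these coefficients proportionally; the entry $\delta F_{11}$ also lands in the first row. The subdiagonal entries of $F$ are the fixed constants $\tfrac12$ (and $\tfrac{\sqrt2}{2}$), so a perturbation $\delta F$ in that band changes the linearization away from a \emph{monic} colleague form, but one can rescale rows/columns by a diagonal similarity to restore the band, at the cost of a further controlled perturbation of the first row; alternatively, one tracks directly how the characteristic polynomial changes. The cleanest route is probably to compute $\det(xI - (A+\delta A))$ by expansion along the first column (as is standard for colleague/companion matrices) and collect the first-order term in the perturbations; this yields $\delta p$ as an explicit linear combination of $\delta F$, $\delta u$, $\delta v$ with coefficients involving $\norm{v}_2 = \norm{p}_2 + \mathcal O(1)$. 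From this expansion one reads $\norm{\delta p} \lesssim \norm{v}_2 \epsilon_u + \epsilon_v + \norm{v}_2 \epsilon_F$: the $\norm{v}_2 \epsilon_u$ term comes from $\delta u\, v^*$ (a first-row perturbation of size $\epsilon_u \norm{v}_2$), the $\epsilon_v$ term from $u\,\delta v^*$ (a first-row perturbation of size $\norm{u}_2 \epsilon_v = \epsilon_v$), and the $\norm{v}_2 \epsilon_F$ term from converting the Hermitian perturbation $\delta F$ back to colleague form.

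The main obstacle I anticipate is the $\delta F$ contribution: a symmetric perturbation of $F$ has entries everywhere, not just in the colleague band, so it does not directly correspond to a polynomial perturbation. Handling this requires the structural fact — presumably the crux of \cite{noferini2019structured} — that the component of $\delta F$ outside the band can be re-expressed, via a rank-one modification (i.e., reabsorbed into $\delta u, \delta v$) and/or a diagonal similarity, as a band perturbation of comparable norm, and that this reabsorption only inflates the bound by the factor $\norm{v}_2$. Making this rigorous is where care is needed: one must check that the similarity transformation used to restore the colleague structure is well-conditioned (it is, since it is essentially the identity plus small corrections in the regime $\epsilon_F \ll 1$), and that the induced change in the first row is indeed $\mathcal O(\norm{v}_2 \epsilon_F)$ rather than larger. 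Everything else — the two rank-one first-order terms and the dropping of $\delta u\,\delta v^*$ — is routine bookkeeping of the kind deferred to $\lesssim$.
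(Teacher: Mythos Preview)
The paper does not prove this theorem. It is stated with an explicit attribution to \cite{noferini2019structured}, and the sentence immediately preceding it reads: ``We state this result (slightly simplified and with the current notation) for completeness, and to better motivate the next developments.'' No proof follows; the theorem is invoked as a black box in the proof of Theorem~\ref{thm:strong-stability}, where the bounds on $\delta F$, $\delta u$, $\delta v$ from Lemma~\ref{lem:backward-error-structure} are simply plugged into it.

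Your sketch does identify the right decomposition and the right difficulty. The two rank-one terms $u\,\delta v^*$ and $\delta u\,v^*$ are first-row perturbations and account cleanly for the $\epsilon_v$ and $\norm{v}_2\epsilon_u$ contributions, since $u=e_1$ in the colleague form. You are also correct that the delicate point is the Hermitian perturbation $\delta F$: a generic symmetric $\delta F$ is not in colleague form, and showing that it maps to a polynomial perturbation of size only $\norm{v}_2\epsilon_F$ (rather than, say, $\norm{v}_2^2\epsilon_F$) is precisely the nontrivial content of the cited result. Your proposed mechanism --- restoring colleague structure via a near-identity similarity or reabsorbing off-band entries into the rank-one part --- is plausible in spirit but remains a heuristic here; making it rigorous is exactly what \cite{noferini2019structured} does, and there is nothing in the present paper to compare your argument against. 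To check whether your route matches or differs from the original, you would need to consult that reference directly.
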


In our case, bounds on the perturbation on $F,u,v$ are 
guaranteed by the following result. 

\begin{lemma} \label{lem:backward-error-structure}
	Let $C = F + uv^*$ the colleague linearization for a polynomial expressed in the Chebyshev basis. Let $\hat{\gamma}_j(p)$ be the constant
	defined in \eqref{eq:gammahatj}. If the Schur form is
	computed with the algorithm described in Section~\ref{sec:structured-qr}, 
	then it is the exact Schur form of
	\[
	  C + \delta C = F + \delta F + (u + \delta u)(v + \delta v), 
	\]
	where $\norm{\delta u}_2 \lesssim \norm{u}_2 \epsilon_m$, 
	$\norm{\delta v}_2 \lesssim \norm{v}_2 \epsilon_m$,  
	$\delta F = \delta F^*$ and 
	$\norm{\delta F} \lesssim\hat\gamma_j(p)\cdot \norm{F}_2 \epsilon_m$
	where $j = 1$
	in the single
	shift case, and $j = 2$ in the double shift one. 
\end{lemma}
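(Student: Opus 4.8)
The plan is to track the backward error produced by each individual Givens rotation in the structured algorithm, and then compose these errors over the $\mathcal{O}(n)$ rotations performed in a sweep and the $\mathcal{O}(n)$ sweeps of the whole iteration. The key observation is that the algorithm never forms the full matrix $A$; it only manipulates the generators $d,\beta,u,v$ (plus the bulge entries). Thus the rounding errors committed at each step are naturally errors \emph{on the generators}, not on the entries of $A$. First I would analyze one elementary step — the application of a single Givens rotation $G^*$ on the left and $G$ on the right, as described in Algorithm~\ref{single_bulge_chasing} (and its double-shift analogue in \eqref{eq:double-shift-chase}). The update of $u$ and $v$ is simply multiplication by a $2\times2$ (resp.\ $3\times3$) unitary, which by the standard backward error model for Givens rotations introduces a relative perturbation of size $\epsilon_m$ on the two (resp.\ three) affected entries; since rotations preserve norms, $\norm{\delta u}_2 \lesssim \norm{u}_2 \epsilon_m$ and $\norm{\delta v}_2 \lesssim \norm{v}_2\epsilon_m$ accumulate benignly over all the steps. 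This handles the $\delta u$ and $\delta v$ claims.

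The heart of the argument is the $\delta F$ bound. The point is that the rotation is applied to the small block $A_{i:i+2,i:i+1}$ (single shift) or $A_{i+1:i+4,i+1:i+3}$ (double shift), whose entries are \emph{reconstructed} from the generators via \eqref{g2.2}, in particular the off-diagonal entries $\gamma_i = (\conj{\beta_i}-\conj{u_{i+1}}v_i)+u_i\conj{v}_{i+1}$ and the super-superdiagonal entry $u_i\conj{v}_j-\conj{u_j}v_i$. Computing this small block, applying the rotation, and reading off the updated $d,\beta$ entries introduces a rounding error. To convert this into a symmetric backward error $\delta F = \delta F^*$ on the Hermitian part $F$, I would argue as follows: the rounding error on the $(j+1)\times(j+1)$ block (for $j=1$ or $2$) is bounded in norm by $\epsilon_m$ times the norm of that block; the block is the sum of a piece of $F$ (a piece of a normal matrix with $\norm{F}\le 1$) and a $(j+1)\times(j+1)$ submatrix of $uv^*$ sitting near the diagonal, and the latter has norm at most exactly $\gamma_j(u^{(k)},v^{(k)})$ by the very definition of $\gamma_j$. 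Hence the per-step error is $\lesssim (\norm{F} + \hat\gamma_j(p))\epsilon_m \lesssim \hat\gamma_j(p)\norm{F}_2\epsilon_m$ once we note $\norm{F}\le1 \le \hat\gamma_j(p)$ (the latter because $\gamma_j \ge$ the norm of any single $u_i v_i$-type entry; more carefully one uses that $\norm{u}_2=1$ forces $\hat\gamma_j(p)\gtrsim 1$, or simply absorbs a harmless additive $\norm{F}$). The resulting block perturbation can then be symmetrized: since the block is embedded in a Hermitian-plus-rank-one structure and we are free to attribute the error to $\delta F$ (the structure of the reconstructed entries is exactly the Hermitian-plus-rank-one pattern, as in the analysis philosophy advertised in the Remark after AED), we replace the incurred error by its Hermitian part, at the cost of a constant factor, obtaining $\delta F = \delta F^*$ with $\norm{\delta F}\lesssim \hat\gamma_j(p)\norm{F}_2\epsilon_m$.

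Having bounded one step, I would compose: there are $\lesssim n$ rotations per sweep and $\lesssim n$ sweeps, so the total is $\lesssim n^2$ rotations, each contributing a perturbation absorbed into the running $\delta F,\delta u,\delta v$; since $\hat\gamma_j(p)$ is defined as the supremum over \emph{all} intermediate $u^{(k)},v^{(k)}$ (equation~\eqref{eq:gammahatj}), every step's contribution is uniformly bounded by $\hat\gamma_j(p)\norm{F}_2\epsilon_m$, and the polynomial-in-$n$ factors are swept into the $\lesssim$ notation exactly as declared at the start of Section~\ref{backward}. One must also check the AED steps and the Hessenberg-reduction of the trailing block \eqref{eq:AeD-tail-structure}: by Lemma~\ref{lem:diagblocks} these act on sub-blocks that are again Hermitian-plus-rank-one with generators that are sub-vectors of $d,\beta,u,v$, so the same per-rotation estimate applies verbatim, and the relevant $\gamma_j$ of the sub-vectors is dominated by $\hat\gamma_j(p)$. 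Finally, I would remark that the accumulation keeps $\delta F$ Hermitian because a sum of Hermitian perturbations is Hermitian, and that the similarity relation composes cleanly since each $Q_k$ is exactly unitary.

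The main obstacle I anticipate is the symmetrization and bookkeeping: showing rigorously that the rounding error incurred while reconstructing the small block, rotating it, and writing back $d,\beta$ can be \emph{consistently} attributed to a single Hermitian $\delta F$ together with $\delta u,\delta v$ — rather than to some unstructured perturbation of $A$ — across all steps simultaneously. This requires being careful that the errors in reconstructing the off-diagonal entry $\gamma_i$ (which mixes $\beta$, $u$, $v$) get split correctly between $\delta F$ (the $\conj{\beta_i}$ part) and $\delta u,\delta v$ (the rank-one part), and that the "implicit" upper-triangular entries — never touched numerically — carry exactly the perturbation induced by $\delta u,\delta v$ and $\delta F$ and nothing more. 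This is precisely the advantage of the structured format emphasized in the Remark after the AED section, and making it airtight is where the real work lies; the norm estimates themselves are routine once the structure of the perturbation is pinned down.
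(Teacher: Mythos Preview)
Your outline matches the paper's approach for the bounds on $\delta u$, $\delta v$, and for the near-diagonal part of $\delta F$: the paper also bounds the rounding error on the small $3\times 2$ (resp.\ $4\times 3$) block by $\epsilon_m$ times its norm, writes the block as $F_{i:i+2,i-1:i+1}+W$ with $\norm{W}\le\gamma_j(u,v)\le\hat\gamma_j(p)$, and uses $\frac{1}{\sqrt{2}}\le\norm{F}_2\le 1$ to absorb the additive $\norm{F}$ into $\hat\gamma_j(p)\norm{F}_2$.

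However, the part you flag as the ``main obstacle'' is exactly where the paper has a concrete argument that you are missing, and your proposed resolution via symmetrization is not how it is done. The issue is the entries of $\delta F$ \emph{away} from the band, i.e.\ those with $i>j+1$ (and, by symmetry, $j>i+1$): these are determined implicitly through $u,v$, so the error on them comes from $\delta G_u u$ and $\delta G_v v$, and a priori looks like $|u_i|\,\norm{\delta G_v v}+\norm{\delta G_u u}\,|v_j|$, which could be as large as $\norm{u}\norm{v}\epsilon_m\approx\norm{p}\epsilon_m$ --- far too big. The paper's key observation is the identity \eqref{eq:Aij1}: for $i>j+1$ one has $F_{ij}=-u_i\conj{v_j}$, so that for a rotation acting on rows $(s,s+1)$,
\[
\left\lVert\begin{bmatrix}u_s\\u_{s+1}\end{bmatrix}\right\rVert_2\,|v_j|
=\norm{F_{s:s+1,j}}_2\le\norm{F}_2,
\]
and similarly for the other term. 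This bounds the implicit off-band contribution to $\delta F$ by $\norm{F}_2\epsilon_m$ \emph{without} any $\hat\gamma_j(p)$ factor; the $\hat\gamma_j(p)$ enters only through the explicitly updated band entries. Your symmetrization idea does not by itself produce this cancellation --- you need to invoke the structural relation $F_{ij}=-u_i\conj{v_j}$ explicitly. Once you add this step, the rest of your composition argument goes through as written.
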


\begin{proof}
	Throughout this proof, we denote by $d,\beta,u,v$ the vectors
	of the representation at a generic step of the QR iteration, 
	dropping the indices $k$. 
	
	The Hermitian-plus-rank-1 structure preservation in the backward
	error is an immediate consequence of using the generators $d,\beta,u,v$ for storing the matrix $A$. Hence, the Schur form computed according
	to the 
	algorithm described in Section~\ref{sec:structured-qr} can be written as the exact
	Schur form of a slightly perturbed matrix:
	\[
	  T = Q^* (F + \delta F + (u+\delta u)(v + \delta v)^*) Q,
	\]
	for some unitary matrix $Q$ and perturbations $\delta F 
	= \delta F^*$, $\delta u$ and $\delta v$. The bounds on the 
	backward errors on $u,v$ follows by standard backward error analysis
	since all the rotations $G$ are unitary. 
	
	The Hermitian part $F$ is not computed directly,  
	rather it is is implicitly determined by the vectors $d,\beta,u,v$. 
	Thanks
	to the symmetry, we may just prove that the backward error on
	the lower triangular is bounded. The vectors
	$d,\beta$ are contaminated with an error that can be accounted for
	in $\delta F$, and is created when applying a rotation on the
	small blocks around the diagonal as in \eqref{eq:single-shift-dbeta} (for the single shift case) and \eqref{eq:double-first-steps}, 
	\eqref{eq:double-shift-chase} for the double shift one. 
	
	In the single shift case, the updated $d,\beta$ are obtained\footnote{We have extended 
		the matrix $B$ with one more column on the left with respect to the one in \eqref{eq:single-shift-dbeta} 
	to take into account also the update to $\beta_{i-1}$, even though in Section~\ref{sec:single-shift} 
	is described as a separate step.} by 
	applying unitary operations on the $3 \times 3$ matrix $B$ that
	contains one superdiagonal and up to the third subdiagonal of the
	matrix $A$. This is computed relying on \eqref{g2.2}, and the
	backward error can be bounded by $\norm{B}_2 \epsilon_m$, up
	to a small constant. When operating on a generic block we have 
	in the single shift case
	\[
	  B = F_{i : i+2, i-1:i+1} + W, 
	\]
	where $W$ is constructed according to \eqref{eq:Aij1} and 
	\eqref{eq:Aij2} 
	using $u, v$; in particular, $W$ is obtained selecting a few
	entries from $uv^*$, and satisfies 
	$\norm{W}_2 \leq 
	\gamma_1(u, v) \leq \hat \gamma_1(p)$. 
	Using $\frac{1}{\sqrt{2}} \leq \norm{F}_2 \leq 1$ we obtain
	$
	  \norm{B} \lesssim \hat\gamma_1(p) \cdot \norm{F}_2 
	$ which in turn implies that applying Givens rotations on $B$ 
	and discarding backward errors on diagonals and subdiagonal 
	entries of $F$ yields the 
	upper bounds
	\[
	  |\delta F_{ij}| \lesssim  \hat\gamma_1(p) \cdot \norm{F}_2 
	  \epsilon_m, \qquad 
	  i \in \{ j, j +1 \}. 
	\]
	It now remains to show that the elements below the first subdiagonal, 
	which are only implicitly determined through $u,v$, respect
	a similar bound. We consider a single rotation
	$G$ acting on two consecutive
	indices $(s, s+1)$. By standard backward
	error analysis, we have that the floating point result for 
	applying  $G$ to a vector $w$ will have the form 
	$\fl{Gw} = (G + \delta G) w$ where $\norm{\delta G} \lesssim w$
	\cite{higham2002accuracy}. 
	
	Thanks to the upper Hessenberg
	structure of $A$ and $A + \delta A$,  for every $i > j + 1$, 
	we may write
	\begin{align*}
	 0 = \fl{G (A + \delta A) G^*}_{ij} &= \fl{GFG^*}_{ij} + 
	 ((G + \delta G_u) u)_i \conj{((G + \delta G_v) v)}_j.
	 \end{align*}
	 Hence, at the first order we compute the exact unitary transformation
	 of a perturbed matrix $F + \delta F + (G + \delta G_u) uv^* (G + \delta G_v)^*$, 
	 such that:
	 \begin{align*}
 0 &= F_{ij} + \delta F_{ij} +
  u_i v_j + (\delta G_u u)_i v_j + u_i (\delta G_v v)_j^* \implies \\
  |\delta F_{ij}|&\leq |(\delta G_u u)_i| |v_j| + |u_i| |(\delta G_v v)_j|,
	\end{align*}
	since $F_{ij}+u_iv_j=0.$
	 We now observe that: \[
	\norm{\delta G_u u}_2 \lesssim 
	\left\lVert \begin{bmatrix}
	u_s \\ u_{s+1}
	\end{bmatrix} \right\rVert_2 \epsilon_m, \qquad 
	\norm{\delta G_v v}_2 \lesssim 
	\left\lVert \begin{bmatrix}
	v_s \\ v_{s+1}
	\end{bmatrix} \right\rVert_2 \epsilon_m.
	\]
	Hence, we can bound the magnitude of $\delta F_{ij}$ by 
	\[
	  |\delta F_{ij}| \lesssim 
	  \left( 
	  \left\lVert \begin{bmatrix}
	  u_s \\ u_{s+1}
	  \end{bmatrix} \right\rVert_2 |v_j|  + |u_i|
	  \left\lVert \begin{bmatrix}
	  v_s \\ v_{s+1}
	  \end{bmatrix} \right\rVert_2 \right) \epsilon_m.
	\]
	Since we are in the lower triangular part below the first subdiagonal, 
	we have that 
	\[
	  F_{s:s+1,j} = -u_{s:s+1} {\conj{v}_j} \implies 
	  \left\lVert \begin{bmatrix}
	  u_s \\ u_{s+1}
	  \end{bmatrix} \right\rVert_2 |v_j| = \norm{F_{s:s+1,j}}_2 
	  \leq \norm{F}_2, 
	\]
	and similarly for the other term. Hence, for these lower 
	subdiagonal elements we can guarantee 
	$|\delta F_{ij}| \leq \norm{F}_2 \epsilon_m$, 
	and therefore we have the global bound 
	$\norm{\delta F}_2 \lesssim \hat{\gamma}_1(p) \cdot 
\norm{F}_2 \epsilon_m$.

Repeating the same steps for the double shift case yields essentially the same result, with the only exception that two upper diagonals instead of one need to be considered in \eqref{eq:double-first-steps} and \eqref{eq:double-shift-chase}; hence, we have $\norm{\delta F} \lesssim \hat{\gamma}_2(p) \cdot 
\norm{F}_2 \epsilon_m$. 
\end{proof}

We can now prove the main result, that uses the structure of the 
matrix backward error to characterize the backward error on the
polynomial coefficients. 

\begin{theorem} \label{thm:strong-stability}
	Let $C = F + uv^*$ the colleague matrix of a monic polynomial
	$p(x)$, and $\hat{\gamma}_j(p)$ defined as in \eqref{eq:gammahatj}. Then, the 
	eigenvalues computed by
	the structured QR iteration
	of Section~\ref{sec:structured-qr}
	are the 
	roots of a polynomial $p + \delta p$ satisfying 
	\[
	\norm{\delta p} \lesssim \hat{\gamma}_j(p) 
	\cdot \norm{p} \cdot \epsilon_m
	\]
	where $j = 1$ in the the single shift case, and 
	$j = 2$ for the double shift one. 
\end{theorem}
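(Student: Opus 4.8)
The plan is to combine the three earlier results: the backward-error structure from Lemma~\ref{lem:backward-error-structure}, which localizes the perturbation into the Hermitian and rank-one parts with the explicit bounds $\norm{\delta u} \lesssim \norm{u}\epsilon_m$, $\norm{\delta v} \lesssim \norm{v}\epsilon_m$, $\norm{\delta F} \lesssim \hat\gamma_j(p)\norm{F}_2\epsilon_m$; and Theorem~\ref{thm:chebfinal} from \cite{noferini2019structured}, which converts structured matrix perturbations into a bound on $\norm{\delta p}$. First I would invoke Lemma~\ref{lem:backward-error-structure} to write the computed Schur form as the exact Schur form of $C + \delta C = F + \delta F + (u+\delta u)(v+\delta v)^*$ with the stated bounds; this is the whole content of the rank-structure exploitation, and it is where the parameter $\hat\gamma_j(p)$ enters.

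Next I would feed these bounds into Theorem~\ref{thm:chebfinal}, identifying $\epsilon_u = \norm{u}\epsilon_m \lesssim \epsilon_m$ (since $u = e_1$ so $\norm{u} = 1$), $\epsilon_v = \norm{v}\epsilon_m$, and $\epsilon_F = \hat\gamma_j(p)\norm{F}_2\epsilon_m \lesssim \hat\gamma_j(p)\epsilon_m$ (using $\norm{F}_2 \leq 1$). The theorem then gives
\[
  \norm{\delta p} \lesssim \norm{v}_2\,\epsilon_u + \epsilon_v + \norm{v}_2\,\epsilon_F
  \lesssim \norm{v}_2\,\epsilon_m + \norm{v}_2\,\epsilon_m + \hat\gamma_j(p)\,\norm{v}_2\,\epsilon_m.
\]
Since $\hat\gamma_j(p) \geq \gamma_j(u,v) \geq \norm{F_{\text{last entries}}}$—more usefully, since $\hat\gamma_j(p) \gtrsim 1$ always (the relevant minors of $uv^*$ include entries whose product norm is bounded below by a constant, or one simply notes $\hat\gamma_j \geq \norm{u}\,\|v_{\text{something}}\| $ and uses the colleague structure), the first two terms are absorbed into the third, giving $\norm{\delta p} \lesssim \hat\gamma_j(p)\norm{v}_2\epsilon_m$. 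Finally, using $\norm{v}_2 \approx \norm{p}_2$ (which follows from the explicit form of the rank-one part in \eqref{eq:colleague}, where the correction vector is essentially the reversed coefficient vector scaled by $1/p_n$, and the polynomial is monic), I conclude $\norm{\delta p} \lesssim \hat\gamma_j(p)\norm{p}\epsilon_m$.

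The main obstacle, or rather the only point requiring care, is the bookkeeping needed to justify that $\hat\gamma_j(p)$ can be taken as a uniform amplification factor valid across all QR sweeps, including the AED steps: Theorem~\ref{thm:chebfinal} applies to a single structured perturbation of the original linearization, whereas the algorithm is a long composition of unitary similarities, each introducing its own small perturbation. One must argue that composing these—pulling each backward error back through the accumulated unitary transformations—still yields a perturbation of $C$ of the Hermitian-plus-rank-one form with $\norm{\delta F}$ controlled by the supremum $\hat\gamma_j(p) = \sup_k \gamma_j(u^{(k)}, v^{(k)})$ over all intermediate generator pairs, which is exactly why the definition in \eqref{eq:gammahatj} takes that supremum. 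This is precisely what Lemma~\ref{lem:backward-error-structure} already packages, so in the write-up the proof is short: cite Lemma~\ref{lem:backward-error-structure}, cite Theorem~\ref{thm:chebfinal}, substitute, and simplify using $\norm{u}=1$, $\norm{F}_2\leq 1$, $\norm{v}\approx\norm{p}$, and $\hat\gamma_j(p)\gtrsim 1$.
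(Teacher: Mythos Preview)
Your proposal is correct and follows essentially the same route as the paper: invoke Lemma~\ref{lem:backward-error-structure} to obtain the structured backward errors on $F$, $u$, $v$, then plug these into Theorem~\ref{thm:chebfinal} and simplify using $\norm{u}=1$, $\norm{F}_2\leq 1$, and $\norm{v}\approx\norm{p}$. You are in fact slightly more careful than the paper in explicitly noting that absorbing the $\norm{v}\epsilon_m$ terms into $\hat\gamma_j(p)\norm{v}\epsilon_m$ requires $\hat\gamma_j(p)\gtrsim 1$; the paper suppresses this under the $\lesssim$ convention, and strictly speaking the clean statement would be $(1+\hat\gamma_j(p))\norm{p}\epsilon_m$.
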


\begin{proof}
	In view of Lemma~\ref{lem:backward-error-structure}, we have the
	backward error bound $\norm{\delta F} \lesssim \hat\gamma_j(p) \epsilon_m$, where $j$ is either $1$ or $2$ depending on
	the chosen shift degree. 
	Concerning $u,v$, by standard backward error analysis of the Givens rotations 
	we obtain $\norm{\delta u} \lesssim \epsilon_m$ and 
	$\norm{\delta v} \lesssim \norm{v} \epsilon_m$, since 
	$\norm{u} = 1$. 
	We use these different bounds on the backward errors in 
	Theorem~\ref{thm:chebfinal} to obtain 
	$\norm{\delta p} \lesssim \hat{\gamma}_j(p) \norm{p} \epsilon_m$. 
\end{proof}

\section{Rootfinding for analytic functions on $[-1, 1]$}
\label{sec:analytic-rootfinder}

The tools developed in the previous section can be used for 
approximating the roots of an analytic function $f(x)$ over $[-1, 1]$. 
This can be achieved by finding the roots of the Chebyshev interpolant of the function, using the presented
	algorithm for the computation of the eigenvalues of the colleague matrix.

From a high-level perspective, the approach works as follows:
\begin{enumerate}[(i)]
	\item The function $f(x)$ is evaluated at the Chebyshev points
	  $x_i$, $i = 1, \ldots, n$, and its polynomial interpolant at
	  those points is computed using the FFT (see \cite{trefethen2019approximation}). The degree is adaptively chosen
	  to ensure that $\norm{f(x) - p(x)}_\infty \lesssim \epsilon_m$. 
	\item The roots of the interpolating polynomial are computed 
	  using the QR iteration proposed in Section~\ref{sec:double-shift}. 
	\item Among the computed roots, the ones in $[-1, 1]$ are returned
	  by the algorithm. 
\end{enumerate}
We refer to the behavior of the 
\texttt{roots} command in Chebfun
\cite{trefethen2019approximation,battles2004extension} 
for what concerns points $(i)$ and 
$(iii)$, and we employ the structured
QR iteration for addressing point
$(ii)$. 
We now discuss why we generally expect $\hat{\gamma}_j(p)$ to 
be small on such examples. Let us recall the definition 
of a Bernstein ellipse. 

\begin{definition} \label{def:bernstein-ellipse}
	Let $\rho \geq 1$. The 
	\emph{Bernstein ellipse $E_\rho$}
	is the set 
	\[
	E_\rho := \left\{ 
	\frac{z + z^{-1}}{2}, \ \ 
	|z| = \rho
	\right\} \subseteq \mathbb C. 
	\]
\end{definition}
Bernstein ellipses are the Chebyshev analogue of circles for Taylor
series. If a function is analytic inside $E_\rho$ the coefficients 
of its Chebyshev series decay as $\mathcal O(\rho^{-k})$, 
and the constant depends on the maximum of the absolute value 
of the function on $E_\rho$. We refer the reader to \cite[Chapter~8]{trefethen2019approximation} for further details. 
Remarkably, the polynomial interpolant at the Chebyshev points, 
which are numerically much easier to determine than the coefficients
of the Chebyshev series, have the same decay property, just 
weakened by a factor of $2$ (see the proof of \cite[Theorem~8.2]{trefethen2019approximation} and Chapter 4 on aliasing in the same book). 

As already pointed out, normalizing such polynomial approximant
to be monic requires to divide by a very small leading coefficient,
making the norm of polynomial coefficients very large. 
	
\section{Numerical experiments} \label{sec:numexp}

The experiments have been run on a server with two
Intel(R) Xeon(R) E5-2650v4 CPU with 12 cores and 24 threads each, 
running at 2.20 GHz,
using MATLAB R2017a with the Intel(R) Math Kernel Library Version 11.3.1. The parallel implementation is based on OpenMP. Throughout this section, we refer to 
the algorithm described in Section~\ref{sec:structured-qr} as \fastqr. 

\subsection{Accuracy}

We have tested the accuracy of 
the proposed algorithm 
by computing the roots of several
polynomials; we have tested both random polynomials of different degrees, 
and the more representative example of polynomials obtained by approximating smooth functions over $[-1, 1]$. We have also tested random
oscillatory functions generated with the Chebfun command 
\texttt{randnfun}. For \fastqr, and the 
QR iteration with and without balancing we have computed the backward
errors on the original polynomial. 

\begin{table}\scriptsize \centering
  \setlength{\tabcolsep}{3.4pt}
	\begin{tabular}{c|c|cccccc} 
    $f(x)$ & Degree & $\fastqr$ & \texttt{eig} & \texttt{eig\_nb} & $\hat\gamma_1(p)$ & $\norm{p}_2$ & \# its \\
 \hline 
$p_{100}(x)$ & $100$ & $\num{1.7e-12}$ &  $\num{7.6e-13}$ & $\num{8.6e-13}$ & $1.02$ & $5.18$ & $262$ \\ 
$p_{200}(x)$ & $200$ & $\num{1.6e-12}$ &  $\num{2.5e-12}$ & $\num{2.1e-12}$ & $\num{7.1e-1}$ & $6.88$ & $501$ \\ 
$p_{500}(x)$ & $500$ & $\num{6.1e-12}$ &  $\num{1.5e-11}$ & $\num{2.8e-11}$ & $\num{5.0e-1}$ & $\num{1.1e1}$ & $1180$ \\ 
$p_{1000}(x)$ & $1000$ & $\num{2.2e-11}$ &  $\num{1.0e-10}$ & $\num{7.9e-11}$ & $1.35$ & $\num{1.6e1}$ & $1896$ \\ 
$\log(1 + x + 10^{-3})$ & $688$ & $\num{7.7e-12}$ &  $\num{1.3e-11}$ & $\num{1.8e-2}$ & $\num{7.0e3}$ & $\num{3.5e15}$ & $1734$ \\ 
$\sqrt{x+1.01} - \sin(10^2x)$ & $180$ & $\num{7.4e-13}$ &  $\num{3.2e-13}$ & $\num{7.2e-6}$ & $\num{1.4e1}$ & $\num{1.1e15}$ & $522$ \\ 
$e^x \sin(800x)$ & $891$ & $\num{1.2e-11}$ &  $\num{9.2e-12}$ & $\num{5.7e-7}$ & $3.01$ & $\num{7.3e13}$ & $1963$ \\ 
$\sin(\frac{1}{x^2 + 10^{-2}})$ & $1430$ & $\num{1.6e-6}$ &  $\num{1.2e-11}$ & $\num{2.6e-1}$ & $\num{4.2e8}$ & $\num{3.2e15}$ & $2818$ \\ 
$\texttt{randnfun(1e-1)}$ & $711$ & $\num{3.6e-12}$ &  $\num{4.0e-12}$ & $\num{4.2e-7}$ & $2.58$ & $\num{1.1e14}$ & $1780$ \\ 
$\texttt{randnfun(1e-2)}$ & $710$ & $\num{3.8e-12}$ &  $\num{2.8e-12}$ & $\num{1.6e-7}$ & $3.47$ & $\num{8.2e13}$ & $1754$ \\ 
$\texttt{randnfun(5e-3)}$ & $1355$ & $\num{6.9e-12}$ &  $\num{6.9e-12}$ & $\num{1.6e-6}$ & $1.66$ & $\num{4.7e13}$ & $2846$ \\ 
$J_0(20x)$ & $50$ & $\num{3.3e-14}$ &  $\num{1.9e-14}$ & $\num{2.4e-14}$ & $\num{9.4e1}$ & $\num{1.2e15}$ & $154$ \\ 
$J_0(100x)$ & $148$ & $\num{1.3e-13}$ &  $\num{1.5e-13}$ & $\num{2.6e-11}$ & $\num{1.4e1}$ & $\num{4.7e14}$ & $388$ \\ 
$\frac{e^{x^2 - \frac 12} - 1}{10^{-2} + x^2}$ & $380$ & $\num{4.5e-13}$ &  $\num{1.7e-13}$ & $\num{6.5e-1}$ & $\num{1.2e3}$ & $\num{1.2e16}$ & $1192$ \\ 
$\frac{e^{x^2 - \frac 12} - 1}{10^{-4} + x^2}$ & $3632$ & $\num{3.6e-13}$ &  $\num{3.0e-13}$ & $\num{9.6e-1}$ & $\num{9.3e2}$ & $\num{1.3e16}$ & $6480$ \\
\end{tabular}
	
	\caption{Relative backward errors on the
	  polynomial coefficients obtained using different
	  methods, as defined in \eqref{eq:be}. 
	  \fastqr\ refers to the fast unbalanced QR iteration presented
	  in this paper, \texttt{eig} is the 
	  balanced QR implemented in LAPACK, as included 
  	in MATLAB, and \texttt{eig\_nb} is the unbalanced QR in LAPACK. 
  	The symbol $p_n(x)$ is used to denote a monic polynomial of degree $n$ 
  where the coefficients of degree smaller than $n$ are chosen according
  to a Gaussian distribution of $N(0, 1)$. The column
  named 
degree reports the degree of the polynomial approximant. 
\texttt{randnfun} refers to random oscillatory functions generated with
this command included in Chebfun. $J_0(x)$ denotes
the Bessel function of the first kind. The column, $\hat\gamma_1(p)$
denotes the amplification factor for the backward error
on the polynomial predicted by Theorem~\ref{thm:strong-stability}
	\label{tab:backward-errors}, $\norm{p}_2$ the norm of the
  monic polynomial, and \# its the total number of 
  iterations required for convergence.}
\end{table}

The results are summarized in Table~\ref{tab:backward-errors}. 
For each test, the method returns also the value of $\hat \rho_1(p)$
computed during the iterations. 
We see that this value is relatively
small for all functions considered, except for 
$f(x) = \sin(1 / (x^2 + 10^{-2}))$. In fact, the accuracy on this particular
example is not on par with the QR iteration. However, we remark
that the accuracy for the computed roots in $[-1, 1]$ (which are easy
to compute explicitly in this case) is at the machine precision level, 
and the only inaccurate roots are the ones close to the boundary
of the Bernstein ellipse where the function is defined. 
We have not been able to
find other examples (except ones based on modifying this particular function)
where this happens. We are still unaware if the algorithm may be
modified (for instance with a smarter shifting strategy) to avoid 
this growth. Upon closer inspection, it seems to be caused by a single
entry in the vectors $u,v$ which is growing large. 

We note that the QR without balancing performs poorly in all the cases
where analytic functions are involved, since these correspond to 
companion matrices with a large norm. The behavior is instead comparable
to the balanced case (and to \fastqr) for polynomials with random Chebyshev coefficients. 

\subsection{Performances and asymptotic complexity}

\begin{figure}
\centering
\subfloat[{Comparison of the CPU time of the algorithm implemented with and without the aggressive early deflation. Both the algorithms have been executed
	sequentially, using only one core.}\label{fig_aed_noaed}]
  {\begin{tikzpicture}
\begin{semilogxaxis}[
  title = {},  
  xlabel = {$n$},
  ylabel = {Time($s$)},
     x tick label style={/pgf/number format/.cd,%
          scaled x ticks = false,
          set thousands separator={},
          fixed},
  legend pos=north west,
  ymajorgrids=true,
    grid style=dashed,]
  \addplot[color=blue,mark=*, mark size=1pt] table {time_aed.dat};
  \addlegendentry{{\tt \fastqr}}
  \addplot[color=red, mark=square*, mark size=1pt] table {time_noaed.dat};
  \addlegendentry{{\tt fastQR}}
\end{semilogxaxis}
\end{tikzpicture}}
  \qquad
\subfloat[][Comparison of the CPU time of the algorithm {\tt \fastqr} with the CPU time taken by the Matlab command {\tt eig}. Both the algorithms have been executed using four cores.\label{fig_cheby_eig}]
  {\begin{tikzpicture}
\begin{loglogaxis}[
  title = {},  
  xlabel = {$n$},
  ylabel = {Time($s$)},
  ytick={0,10^(-6),10^(-4),10^(-2),1,10^2,10^4},
  legend pos=north west,
  ymajorgrids=true,
    grid style=dashed,]
  \addplot[color=blue,mark=*, mark size=1pt] table {time_par_definitivo.dat};
  \addlegendentry{{\tt \fastqr}}
  \addplot[color=red,mark=square*, mark size=1pt] table {time_eig_definitivo.dat};
   \addlegendentry{{\tt eig}}
\end{loglogaxis}
\end{tikzpicture}}
  \caption{}
\end{figure}

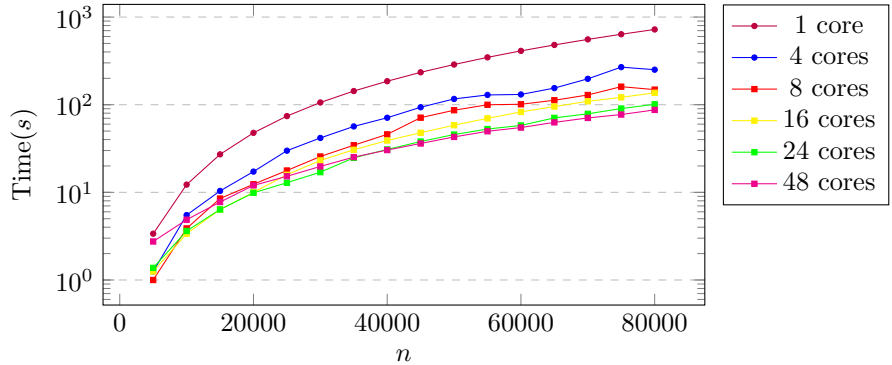
\begin{figure}
\centering
  {\begin{tikzpicture} 
\begin{semilogyaxis}[
  title = {},  
  xlabel = {$n$},
  ylabel = {Time($s$)},
     x tick label style={/pgf/number format/.cd,%
          scaled x ticks = false,
          set thousands separator={},
          fixed},
  legend pos=outer north east,
  ymajorgrids=true,
    grid style=dashed,]
    \pgfplotsset{
width=8cm, height=4cm,
scale only axis
}
    \addplot[color=purple,mark=*, mark size=1pt] table {time_np=1.dat};
  \addlegendentry{{1 core}}
  \addplot[color=blue,mark=*, mark size=1pt] table {time_np=4.dat};
  \addlegendentry{{4 cores}}
  \addplot[color=red, mark=square*, mark size=1pt] table {time_np=8.dat};
  \addlegendentry{{8 cores}}
    \addplot[color=yellow, mark=square*, mark size=1pt] table {time_np=16.dat};
  \addlegendentry{{16 cores}}
    \addplot[color=green, mark=square*, mark size=1pt] table {time_np=24.dat};
  \addlegendentry{{24 cores}}
      \addplot[color=magenta, mark=square*, mark size=1pt] table {time_np=48.dat};
  \addlegendentry{{48 cores}}
\end{semilogyaxis}
\end{tikzpicture}}
  \caption{Comparison of the clock time of  {{\tt \fastqr}} exploiting parallelism using 1, 4, 8, 16, 24 and 48 cores, for the computation of the roots of random polynomials expressed in the Chebyshev basis, with different degrees $n$.}\label{fig_parallel}
\end{figure}

We tested the speed of the proposed algorithm experimentally. The single shift iteration has been implemented in FORTRAN 90 and a MEX file has been used to interface it with MATLAB.

First, we have tested the speed up obtained 
by modifying the algorithm of \cite{eidelman2008efficient} introducing the
aggressive early deflation strategy. As 
visible in Figure~\ref{fig_aed_noaed}, 
the speed up is considerable for large matrices. More in detail, 
Figure~\ref{fig_aed_noaed} compares the CPU time of the two algorithms for randomly generated polynomials for different values of the degree $n$.

In Figure \ref{fig_cheby_eig} we compare  {\tt \fastqr} with the Matlab command {\tt eig} applied to the companion matrix in the Chebyshev basis as in \eqref{eq:colleague}. We  observe that {\tt \fastqr} is faster than {\tt eig} already for $n$ about $10$.  For polynomials of a very high degree the difference of the CPU time is remarkable. This fact is coherent with the theory since the asymptotic cost of {\tt \fastqr} is $\mathcal{O}(n^2)$ while the asymptotic cost of {\tt eig} is $\mathcal{O}(n^3).$

Then, we enabled the concurrency on the implementation as described in Section~\ref{parallel}, and again we have
tested the performance for different values 
of $n$. In order to attain optimal performances, the aggressive early deflation needs to provide a number of shifts that is 
larger than the number of available cores; hence, the dimension of the block subject to
the AED has been increased proportionally
to the number of
available cores. After some tuning, it has
been found the optimal choice to be
about $9$ times the number of available cores.

This choice produces a number of shifts that is
about $6$ times the number of parallel tasks. As the dimension of the problem decreases, thanks to deflations, the use of a large number of cores becomes less advantageous. Hence, the implementation 
halves the number of concurrent tasks (and hence the needed shifts) when the size of the matrix becomes smaller than $64$ times the number of used cores. 

In Figure \ref{fig_parallel} are reported the clock times needed for the computation using $1$, $4$, $8$, $16$, $24$ and $48$ cores. It is immediate to note the good performances of the parallel implementation already with $4$ cores, 
which brings a speed up of about $300\%$.
The gain from increasing 
the concurrency is reduced as the number of cores increases up 
to $48$. We note that the server only had $24$ combined physical cores, 
and going above $12$ required communication between the 
different CPUs, which inevitably reduces the efficiency of the 
parallelization.
Considering most consumer hardware has between $2$ and $8$ or
$16$ cores, this shows that method is well-tuned for the currently 
available architectures. 

\section{Conclusions}

A quadratic time structured QR algorithm for Hermitian-plus-rank-one matrices has been presented, by improving the ideas originally
discussed in \cite{eidelman2008efficient}.
The method relies on a structured representation for the matrix that requires to store only $4$ vectors.
We have shown that it is possible and advantageous to perform aggressive early deflation in this format, and that the structured representation allows to easily parallelize the scheme avoiding many of the difficulties present in dense  chasing 
algorithms. 

The backward stability of the method has 
been carefully analyzed, with a particular 
focus on the use of the algorithm for 
polynomial rootfinding in the Chebyshev basis. 
It has been shown that, under suitable assumptions that are easy to verify
at runtime, the method
has a small backward error on the polynomial
coefficients, a property that is not present in
the balanced and unbalanced QR iteration.
Numerical experiments confirm
the stability in the vast majority of the considered cases, and in
particular when dealing with polynomial approximating smooth functions. 
We also show that criterion is effective in detecting the cases where 
there could have been some accuracy loss. In all the cases considered 
the roots in $[-1, 1]$, which are the ones of interest, were computed
up to machine precision. 

\section*{Acknowledgments}
 Both authors are members of the INdAM/GNCS research group; the authors wish to thank the
 group for providing partial support through the
 project ``Metodi low-rank per problemi di algebra lineare con
 struttura data-sparse''. We wish to thank the referees, that 
 helped 
 to greatly improve the clarity of the manuscript. 
	
\bibliographystyle{siamplain}
\bibliography{bib_fastqr}

\end{document}